\newif\ifthesis
\newcommand{\ZZ}{\mathbb{Z}}
\newcommand{\QQ}{\mathbb{Q}}
\newcommand{\lam}{\lambda}
\newcommand{\FF}{\mathbb{F}}
\newcommand{\lct}{\mathrm{lct}\,}
\newcommand{\then}{\Rightarrow}
\newcommand{\calP}{\mathcal{P}}
\newcommand{\alp}{\alpha}
\theoremstyle{definition}
\newtheorem{theorem}{Theorem}[section]
\newaliascnt{lemct}{theorem}
\newtheorem{lem}[lemct]{Lemma}
\newaliascnt{propct}{theorem}
\newtheorem{prop}[propct]{Proposition}
\newaliascnt{corct}{theorem}
\newtheorem{cor}[corct]{Corollary}
\newaliascnt{exmct}{theorem}
\newtheorem{exm}[exmct]{Example}
\newaliascnt{exerct}{theorem}
\newaliascnt{discct}{theorem}
\newtheorem{disc}[discct]{Discussion}
\newaliascnt{rmkct}{theorem}
\newtheorem{rmk}[rmkct]{Remark}
\newaliascnt{clmct}{theorem}
\newaliascnt{qstct}{theorem}
\newtheorem{qst}[qstct]{Question}
\newaliascnt{deffct}{theorem}
\newtheorem{deff}[deffct]{Definition}
\newcommand{\st}{such that }
\newcommand{\twlogd}[1]{without loss of generality{#1}}
\newcommand{\tiff}{if and only if }
\newcommand{\bref}[1]{\textbf{\autoref{#1}}}
\newcommand{\pref}[1]{(\ref{#1})}
\newcommand{\PP}{\mathbb P}
\newif\ifdebug
\newcommand{\labelt}[1]{\label{#1}\ifdebug \texttt{\smaller{#1     }} \fi}
\numberwithin{equation}{theorem}
\title{Legendre Polynomials Roots and the $F$-Pure Threshold of bivariate Forms}  
\author{Gilad Pagi}
\thanks{The author  acknowledges the partial financial support of NSF grant DMS-0943832.}
\begin{document}

\maketitle
\begin{abstract}
We provide a direct computation of the $F$-pure threshold of degree four homogeneous polynomials in two variables and, more generally, of certain homogeneous polynomials with four distinct roots. The computation depends on whether the cross ratio of the roots satisfies a specific M\"{o}bius transformation of a Legendre polynomial. We then make a connection between a long lasting open question, involving the relationship between the $F$-pure and the log canonical threshold, and roots of Legendre polynomials over $\FF_p$. 
\end{abstract}
\section{Introduction}
%\todo{grant still ok?}
In this note, we provide an elementary computation of the $F$-pure threshold of the homogeneous defining equation of a family of subschemes of $\PP^1$ supported at four points. Our formula depends on whether the cross-ratio of these four points satisfies a certain \textit{Deuring} Polynomial; a Deuring polynomial is a M\"{o}bius transformation of a \textit{Legendre} Polynomial of the same degree (See \bref{deuDef}).

Let $K$ denote a field of prime characteristic $p$ and let $R=K[x_1,...,x_t]$. Fix any polynomial $f\in R$. By the $F$-pure threshold (at the origin) we mean:
 \begin{equation}\labelt{FTdef}
 FT(f):=\sup\left\{ \frac{N}{p^e}\mid  N,e\in  \ZZ_{>0},  f^N\not\in (x_1^{p^e},...,x_t^{p^e})R   \right\}.
 \end{equation} This definition appeared in \cite{BMS09} (the original tight-closure formulation is stated in \cite{TW04}). The $F$-pure threshold is a characteristic $p$ analog of the log canonical threshold of a complex singularity (as defined in \cite{KOL97}). A famous open conjecture, stated in \bref{openQ}, relates these two thresholds; interestingly, our work reduces this conjecture, for a certain family of bivariate forms, to understanding roots of Legendre polynomials over $\FF_p$.

Our first goal is to compute the $F$-pure threshold of a bivariate homogeneous polynomial of degree four. Because the case of multiple roots is easy (see \bref{restForms}), our main result treats the case where the roots are all distinct:
\begin{theorem}\labelt{mainThm} Let $K$ be a field of prime characteristic $p$. Consider a degree four homogeneous polynomial $f\in K[x,y]$, with distinct roots over $\PP_{\overline{K}}^1$. After fixing an order of the roots, let $a\in \overline{K}$ be their cross-ratio. Denote $n_1 = \frac{p-1}{2}$, and let $H\{n_1\}(\lam)\in K[\lam]$ be the Deuring polynomial (defined in \bref{deuDef}) of degree $n_1$. Then 
$$
FT(f) = \left\{
\begin{array}{ll}
\frac{1}{2}& \text{ if } p=2 \text{ or if both } p>2 \text{ and } H\{n_1\}(a) \neq 0 \\
\frac{1}{2}\left(1-\frac{1}{p}\right)& \text{ if } p>2 \text{ and } H\{n_1\}(a) = 0
\end{array}\right.
$$
\end{theorem}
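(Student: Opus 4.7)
Linear changes of coordinates on $K[x,y]$ are $K$-algebra automorphisms that fix $(x,y)$ and commute with $p^e$-th Frobenius, so the full $\mathrm{PGL}_2$-action on $\PP^1$ preserves the ideals $(x^{p^e},y^{p^e})$ and hence $FT$. Applying such a transformation sending three of the four roots of $f$ to $[0:1],[1:0],[1:1]$, the fourth becomes $[a:1]$ and, up to a scalar (irrelevant for $FT$), $f=xy(x-y)(x-ay)$, which I assume from now on. The universal upper bound $FT(f)\leq 1/2$ is immediate from homogeneity: for $N>(p^e-1)/2$ the identity $i+j=4N>2(p^e-1)$ forces $\max(i,j)\geq p^e$, so $f^N\in(x^{p^e},y^{p^e})$. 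The workhorse for matching lower bounds is a single binomial identity: since $f^M=(xy)^M(x-y)^M(x-ay)^M$, the coefficient of $x^{2M}y^{2M}$ in $f^M$ equals that of $x^My^M$ in $(x-y)^M(x-ay)^M$, which by a routine two-variable binomial expansion is $(-1)^MH_M(a)$, where
\[
H_M(\lam):=\sum_{k=0}^M\binom{M}{k}^2\lam^k.
\]
For $M=n_1$ this is, up to sign, precisely the Deuring polynomial $H\{n_1\}(a)$.

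For $p=2$, or for $p>2$ with $H\{n_1\}(a)\ne 0$, I take $M=\lfloor(p^e-1)/2\rfloor$; its base-$p$ digits are all equal to $n_1$ (respectively all equal to $1$). Lucas's theorem combined with Frobenius over $\FF_p[\lam]$ gives, for odd $p$,
\[
H_M(a)=\prod_{i=0}^{e-1}H_{n_1}(a^{p^i})=H_{n_1}(a)^{(p^e-1)/(p-1)},
\]
while for $p=2$ the same method collapses to $H_M(a)=(a^{2^{e-1}}-1)/(a-1)$, nonzero since $a\ne 1$ and $\overline{\FF_2}$ contains no nontrivial $2^k$-th root of unity. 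In either case $f^M\notin (x^{p^e},y^{p^e})$ (using $2M<p^e$), giving $FT(f)\geq 1/2$ and matching the universal upper bound.

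When $p>2$ and $H\{n_1\}(a)=0$, the vanishing coefficient of $x^{p-1}y^{p-1}$, combined with the observation that any other monomial of the degree-$2(p-1)$ form $f^{n_1}$ has $\max(i,j)\geq p$, puts $f^{n_1}\in(x^p,y^p)$; writing $f^{n_1}=x^pA+y^pB$ and using Frobenius in characteristic $p$,
\[
f^{n_1p^{e-1}}=x^{p^e}A^{p^{e-1}}+y^{p^e}B^{p^{e-1}}\in(x^{p^e},y^{p^e}),
\]
which forces $FT(f)\leq n_1/p=(p-1)/(2p)$. For the matching lower bound take $M=n_1p^{e-1}-1$, whose base-$p$ digits are $n_1-1$ in position $e-1$ and $p-1$ in positions below. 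Using $\binom{p-1}{d}\equiv(-1)^d$ (the signs square out in $H_M$), Lucas together with a telescoping geometric sum produces
\[
H_M(a)=H_{n_1-1}(a)^{p^{e-1}}\cdot\frac{a^{p^{e-1}}-1}{a-1};
\]
the second factor is nonzero because $a\ne 1$ and $\overline{\FF_p}$ has no nontrivial $p^k$-th roots of unity.

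The key remaining arithmetic input is the lemma that $H\{n_1\}(a)=0$ forces $H_{n_1-1}(a)\ne 0$. Via the Möbius substitution $\mu=(1+\lam)/(1-\lam)$ one has $H_m(\lam)=(1-\lam)^m P_m(\mu)$ for $P_m$ the classical Legendre polynomial, so the claim reduces to the coprimality of $P_{n_1}$ and $P_{n_1-1}$ over $\FF_p$. This follows from Bonnet's three-term recurrence $(n+1)P_{n+1}(x)=(2n+1)xP_n(x)-nP_{n-1}(x)$: a hypothetical common root $c$ would cascade downward to $P_0(c)=0$, since the denominators $n_1-k$ for $0<k<n_1$ are all nonzero modulo $p$, contradicting $P_0=1$. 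I expect this coprimality step (together with the reinterpretation of $H_m$ via the Möbius transform) to be the main point requiring care, although the calculation of $H_M(a)$ via Lucas and telescoping is the most technical piece of the argument.
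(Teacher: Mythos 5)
Your proof is correct and follows essentially the same strategy as the paper: reduce to a canonical form by sending three of the four roots to $0,\infty,1$, identify the coefficient of $x^{2M}y^{2M}$ in $f^M$ as (a sign times) the Deuring polynomial $H\{M\}$ evaluated at the cross-ratio $a$, and pin down both values of $FT$ via Schur's congruence together with the same $p$-adic digit choices for $M$ (all digits equal to $n_1$ for the nonvanishing case, and the pattern $(p-1,\ldots,p-1,n_1-1)$ for the vanishing case). Your Frobenius-power shortcut for the upper bound in the $H\{n_1\}(a)=0$ case, namely that $f^{n_1}\in(x^p,y^p)$ forces $f^{n_1p^{e-1}}\in(x^{p^e},y^{p^e})$, is a small but clean variant of the paper's choice $N=(p^e-1)/2$. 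The one genuine divergence is the sub-lemma that $H\{n_1\}$ and $H\{n_1-1\}$ have no common root in $\overline{\FF_p}$ and that $0,1$ are not roots: you translate to classical Legendre polynomials via the M\"obius substitution and run Bonnet's three-term recurrence downward to reach the contradiction $P_0=0$, while the paper --- explicitly to avoid importing facts about Legendre polynomials --- gives a self-contained algebraic proof based on second-order differential operators annihilating $H\{n\}$ and its antiderivative $F\{n\}$ (Lemma~\ref{repRootsDELem}, Proposition~\ref{Fsimple}), combined with the first-order relation $(1-\lam)F'+2nF=H\{n\}$ (Lemma~\ref{pascalConnection}). The paper itself notes that your recurrence/orthogonality route is an available alternative; yours is shorter for a reader who accepts Bonnet's recurrence modulo $p$, while the paper's stays entirely within integer-coefficient Deuring polynomials and needs no Legendre-theoretic input. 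Also note that the paper proves the more general Theorem~\ref{thmTwoLSimp} (multiplicities $b,b,c,c$) and obtains your statement as the $b=c=1$ case, whereas your argument targets degree four directly; the extra generality costs a little bookkeeping in the digit choice for $cN$ but no new ideas.
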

It is intriguing that the value of the $F$-pure threshold depends on whether the cross-ratio satisfies some (M\"{o}bius transformation of) Legendre polynomial.
The technique we use in the proof relies on the properties of the {Deuring Polynomials} as presented in \cite{SELF}. While some of these properties can be deduced from known facts about Legendre polynomials, we include straightforward algebraic proofs (or cite some from \cite{SELF})
in order to be self-contained.

\iffalse
\begin{rmk} After a base change to $\overline{K}$ and linear change of variables, the polynomial adopts the form:
$$
f_a = xy(x+y)(x+a y),\, a\in \overline{K}-\{0,1\}.
$$
\end{rmk}
\fi

We generalize \bref{mainThm} to certain higher degree polynomials:
\begin{restatable}{theorem}{thmTwo}\labelt{thmTwoL} Let $K$ be a field of prime characteristic $p$. Let $c,b\in \ZZ_{>0}$ with $p \equiv 1 \pmod{b+c}$. Let $f\in K[x,y]$ be a homogeneous polynomial of degree $2b+2c$ with exactly four distinct roots over $\PP_{\overline{K}}^1$, where the multiplicities are $b,b,c,c$ after fixing an order. Let $a$ be the their cross-ratio. Denote $n=\frac{c}{c+b}(p-1)$. Then
 $$
FT(f) = \left\{
\begin{array}{ll}
\frac{1}{b+c}& \text{ if } H\{n\}(a) \neq 0 \\
\frac{1}{b+c}\left(1-\frac{1}{p}\right)& \text{ if } H\{n\}(a) = 0
\end{array}\right.
$$
\end{restatable}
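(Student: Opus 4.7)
The plan is to parallel the proof of \bref{mainThm}, substituting the Deuring polynomial of degree $n=c(p-1)/(b+c)$ for the classical degree-$\tfrac{p-1}{2}$ one. After base change to $\overline{K}$ and a linear (M\"obius) change of coordinates, both of which preserve the $F$-pure threshold at the origin, I may assume
\[
f = x^b y^b (x-y)^c (x-ay)^c.
\]
Set $q_e := (p^e-1)/(b+c)$, an integer since $p \equiv 1 \pmod{b+c}$. Since $f$ is homogeneous of degree $2(b+c)$, the polynomial $f^N$ has total degree $2(b+c)N$, and any monomial $x^I y^J$ of $f^N$ with $I, J \le p^e-1$ forces $N \le q_e$. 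This already yields the uniform bound $FT(f) \le 1/(b+c)$; moreover, when $N = q_e$, the only candidate witnessing monomial is the middle one $x^{p^e-1}y^{p^e-1}$.

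The central computation will be the coefficient of $x^{p^e-1}y^{p^e-1}$ in $f^{q_e}$. Expanding $x^{bq_e} y^{bq_e} (x-y)^{cq_e}(x-ay)^{cq_e}$ and collapsing the two inner binomial sums will yield
\[
[x^{p^e-1}y^{p^e-1}]\, f^{q_e} \;=\; (-1)^{cq_e}\, H\{cq_e\}(a).
\]
Writing $cq_e = n(1 + p + \cdots + p^{e-1})$, with every base-$p$ digit equal to $n < p$, a digit-wise application of Lucas's theorem (recorded for Deuring polynomials in \cite{SELF}) together with $H\{n\}(a^{p^i}) = H\{n\}(a)^{p^i}$ (valid in characteristic $p$ since $H\{n\}$ has $\FF_p$-coefficients) will give
\[
H\{cq_e\}(a) \equiv H\{n\}(a)^{\,1+p+\cdots+p^{e-1}} \pmod{p}.
\]

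If $H\{n\}(a) \ne 0$, the coefficient above is nonzero for every $e$, so the largest admissible $N$ at level $e$ is exactly $q_e$ and $FT(f) = \lim_{e} q_e/p^e = 1/(b+c)$. If $H\{n\}(a) = 0$, it vanishes for every $e$, so $f^{q_e} \in (x^{p^e},y^{p^e})$; specializing to $e=1$ gives $f^{q_1} \in (x^p,y^p)$, whereupon the freshman's dream (equivalently, Frobenius flatness of $K[x,y]$) upgrades this to $f^{q_1 p^{e-1}} = (f^{q_1})^{p^{e-1}} \in (x^{p^e},y^{p^e})$ for every $e$. Thus $f^N \in (x^{p^e}, y^{p^e})$ whenever $N \ge q_1 p^{e-1}$, sharpening the upper bound to $FT(f) \le (p-1)/[p(b+c)]$.

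The matching lower bound in the degenerate case will be the main obstacle. My plan is to exhibit, for each $e$, an explicit monomial $x^I y^J$ in $f^{q_1 p^{e-1} - 1}$ with $I, J < p^e$ and nonzero coefficient. To this end I would write $f^{q_1} = x^p A + y^p B$ and use the decomposition
\[
f^{q_1 p^{e-1} - 1} \;=\; (f^{q_1})^{p^{e-1}-1}\cdot f^{q_1-1},
\]
expanding $(x^p A + y^p B)^{p^{e-1}-1}$ via the Lucas identity $\binom{p^{e-1}-1}{k}\equiv (-1)^{s_p(k)} \pmod{p}$ to isolate a summand whose product with a suitably balanced monomial of $f^{q_1-1}$ has both $x$- and $y$-degrees strictly below $p^e$. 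Verifying non-cancellation of that coefficient is the technical crux, and I expect it to reduce to residual identities among Deuring polynomials of lower degree at $a$ that persist whenever $H\{n\}(a) = 0$ (as developed in \cite{SELF}). Once such a monomial is produced, $FT(f) \ge \lim (q_1 p^{e-1}-1)/p^e = (p-1)/[p(b+c)]$, matching the upper bound and completing the proof.
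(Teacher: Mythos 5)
Your argument for the upper bound $FT(f) \le 1/(b+c)$, for the lower bound when $H\{n\}(a) \ne 0$, and for the degenerate-case upper bound $FT(f) \le \tfrac{1}{b+c}(1-\tfrac{1}{p})$ all track the paper's reasoning: the coefficient of $x^{(b+c)N}y^{(b+c)N}$ in $f^N$ is (up to sign) $H\{cN\}(a)$, and Schur's congruence collapses this to a power of $H\{n\}(a)$ when $cN = n(1+p+\cdots+p^{e-1})$. (The Frobenius-flatness detour for the upper bound is unnecessary — taking $e=1$ directly gives the same bound — but it is not wrong.)

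The genuine gap is the lower bound in the degenerate case $H\{n\}(a) = 0$, and you have flagged it yourself as unverified. Two concrete problems with the plan as stated. First, your candidate exponent $N = q_1 p^{e-1} - 1$ is the wrong one for $c>1$: computing $cN = np^{e-1} - c$ in base $p$, the units digit is $p-c$, which is generally neither $p-1$ nor $n-1$, so Schur's congruence would force you to control $H\{p-c\}(a)$, about which the paper's lemmas say nothing. Second, the proposed route via $f^{q_1} = x^p A + y^p B$ and expansion of $(x^pA+y^pB)^{p^{e-1}-1}$ does not isolate a single clean coefficient; $A$ and $B$ are full polynomials, many summands contribute to any fixed target monomial, and you give no mechanism to rule out cancellation. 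The paper avoids all this by engineering $N$ directly: starting from $cN = np^{e-1}-1$ (digits $(p-1,\dots,p-1,n-1)$) and subtracting $c-1$ terms $\ell(m)=(p-n)p^{md-1}$, each of which flips one digit from $p-1$ to $n-1$ while keeping the total $\equiv 0 \pmod{c}$. Then $cN$ has all base-$p$ digits in $\{p-1, n-1\}$, so $H\{cN\}(a)$ factors by Schur's congruence into powers of $H\{p-1\}(a) = (a-1)^{p-1} \ne 0$ and $H\{n-1\}(a)$, and the crucial input is \bref{Hsimple}: $H\{n\}$ and $H\{n-1\}$ share no roots, so $H\{n\}(a)=0$ forces $H\{n-1\}(a)\ne 0$. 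That lemma — not a cancellation identity from expanding Frobenius powers — is the ingredient you are missing.
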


\begin{disc}\labelt{openQ} Our theorem connects a well known open question regarding the relationship between $F$-pure and log canonical threshold to a seemingly unrelated question of roots of Legendre polynomials over $\FF_p$. Recall the open question. Consider a polynomial $f$ with integer coefficients\footnote{The same can be done for a complex polynomial, but further technical steps are needed.}. On the one hand, we can consider $f$ as a complex polynomial, and compute the log canonical threshold of $f$, denoted $\lct(f)$. On the other hand we can compute $FT(f_p)$ repeatedly for \textit{each $p$}, where $f_p$ is the natural image of $f$ over $\ZZ_p$. Let $\calP$ be the set of all primes $p$ \st $FT(F_p)=\lct(f)$. A decades old open conjecture predicts that $\calP$ is of infinite cardinality\footnote{The conjecture as stated appears in \cite{MTW05}, but it roots dates back to the work of the Japanese school of tight closure (see \cite{HW02}). Surveys and other formulation can be found in \cite{SMI97}, \cite{EM06}. See some progress in \cite{HerLog}.}. It is well known that $\lim_{p\to\infty} FT(F_p)=\lct(f)$.\footnote{This observation is the culmination of a series of papers, going back to \cite{HH90}, \cite{Smi00}, \cite{H01}, \cite{HW02}, \cite{HY03}, \cite{T04}, \cite{HT04}, \cite{TW04}, \cite{MTW05}. For a gentle introduction see \cite{KSbasic}.}

We now point out how this open question relates to Legendre polynomials for the case of the family of polynomials in \bref{thmTwoL}. Let $f$ be a polynomial as in the theorem. One can compute that $\lct(f)=\frac{1}{b+c}$. In order to verify the conjecture for this specific family of polynomials, one should prove that there are infinitely many $p$'s \st the cross ratio of $f_p$ is not a root of $H\left\{\frac{c}{b+c}(p-1)\right\}$ over $\overline{F_p}$. For example, here is a precise formulation of our statement is the simplest case. 
\begin{qst}
Suppose $f = x^by^b(x+y)^c(x+ay)^c\in \ZZ[x,y]$. Denote $$\calP = \left\{\text{all primes } p\,\, \left| \,\, p\equiv 1 \pmod{b+c} \,\,\,\text{   and   }H\left\{\frac{c}{b+c}(p-1)\right\}(a)\not\equiv 0 \pmod{p} \right.\right\}.$$
Is it true that the cardinality of $\calP$ is infinite?
\end{qst}

This may be very difficult, and is related to deep theorems in number theory. For example, the case where $b=c=1$ is already known as it is equivalent to the fact that there are infinitely many $p$'s \st an elliptic curve is ordinary (see \cite{SELF}). Further evidence that the conjecture is  connected to ordinarity is explored in \cite{MS11}
\end{disc}

In addition, the $F$-pure threshold computation in \bref{mainThm} provides an immediate alternative proof for the known corollary regarding properties of the roots of Legendre polynomials mod $p$:
\begin{cor}
Fix a prime $p>2$, a field $K$ of characteristic $p$ and let $n=\frac{p-1}{2}$. If $b \in K-\{\pm 1\}$ is a root of the Legendre polynomial of degree $n$, $P_n(x) \in K[x]$, then these are roots as well:
$$
\pm b, \pm \frac{3+b}{-1+b}, \pm \frac{3-b}{1+b}.
$$
\end{cor}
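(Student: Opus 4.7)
The plan is to exploit the fact that the $F$-pure threshold $FT(f)$ of a bivariate degree four form $f$ is intrinsic to $f$, hence insensitive to any ordering of its roots, while the hypothesis of Theorem \ref{mainThm} that distinguishes the two possible values of $FT(f)$ is stated in terms of one chosen cross-ratio. Any reordering of four distinct roots changes the cross-ratio by one of the six elements of the Klein-four coset orbit
\[
a,\quad \tfrac{1}{a},\quad 1-a,\quad \tfrac{1}{1-a},\quad \tfrac{a-1}{a},\quad \tfrac{a}{a-1},
\]
i.e.\ the natural $S_3 \cong S_4/V_4$-orbit of $a$. Therefore, by Theorem \ref{mainThm}, if $H\{n_1\}$ vanishes at one cross-ratio of the root set then it must vanish on the entire $S_3$-orbit; otherwise two different orderings of the \emph{same} polynomial $f$ would yield the two different candidate values $\tfrac{1}{2}$ and $\tfrac{1}{2}(1 - \tfrac{1}{p})$.

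By Definition \ref{deuDef}, the Deuring polynomial is a M\"{o}bius transformation of the Legendre polynomial: there is an explicit $\phi \in \mathrm{PGL}_2(K)$ and a nowhere-vanishing factor $c(\lambda)$ such that $H\{n_1\}(\lambda) = c(\lambda)\,P_{n_1}(\phi(\lambda))$. Given a root $b \in K\setminus\{\pm 1\}$ of $P_{n_1}$, I will set $a := \phi^{-1}(b)$; the hypothesis $b \neq \pm 1$ is precisely what guarantees $a \in \overline{K}\setminus\{0,1,\infty\}$, so that a degree four form $f$ with four genuinely distinct roots having cross-ratio $a$ exists. Then $H\{n_1\}(a) = 0$, and the argument above forces $H\{n_1\}$ to vanish at each element of the $S_3$-orbit of $a$. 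Pushing forward by $\phi$, this produces six roots of $P_{n_1}$, namely $\phi$ applied to the orbit of $a$.

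The only remaining step is a finite computation: check that $\phi$ conjugates the standard $S_3$-action on cross-ratios (generated by $a\mapsto 1/a$ and $a\mapsto 1-a$) to the $S_3$-action on $b$ generated by $b\mapsto -b$ and $b\mapsto \tfrac{3+b}{b-1}$, so that the six images are exactly $\pm b,\ \pm\tfrac{3+b}{-1+b},\ \pm\tfrac{3-b}{1+b}$. Verification that these six expressions genuinely form a group orbit is elementary: $b\mapsto -b$ and $b\mapsto \tfrac{3+b}{b-1}$ are both involutions, their composite has order three, and the six listed values are precisely the images of $b$ under this dihedral group of order six. The main (non-routine) obstacle is thus identifying the explicit M\"{o}bius $\phi$ from Definition \ref{deuDef} and matching the two $S_3$-actions; once this is set up, the corollary is immediate.
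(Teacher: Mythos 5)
Your proposal matches the paper's own proof: it rests on the invariance of $FT$ under reordering of the four roots (equivalently, under the $S_3$-action on the cross-ratio), applies \bref{mainThm} to transfer that invariance to vanishing of $H\{n\}$, and then uses the M\"{o}bius relation \pref{DeurToLeg} with $\phi(\lambda)=\frac{1+\lambda}{1-\lambda}$ to translate the six Deuring roots into the six Legendre roots, exactly as the paper does in proving \bref{rootCor}. One minor imprecision: $c(\lambda)=(1-\lambda)^n$ is not literally nowhere-vanishing (it vanishes at $\lambda=1$), but this is harmless since the entire $S_3$-orbit of $a=\phi^{-1}(b)$ avoids $\{0,1,\infty\}$ when $b\neq\pm 1$.
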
 See \textbf{Section \ref{corSec}}.
\subsection*{Acknowledgments}
This article is part of my Ph.D. thesis (\cite{pagithesis}), which was written under the direction of Karen Smith of University of Michigan. I would like to thank Prof. Smith for many useful discussions.

\section{Roots of Deuring Polynomials In Prime Characteristic}

A crucial part of computing $FT(f)$ is identifying when coefficients of monomials of $f^N$ vanish. We later observe (\bref{hassePolyGeneral}) that one of these coefficient is no other than the Deuring polynomial (\bref{deuDef}) evaluated at the cross-ratio. Therefore, we turn to investigate roots of Deuring polynomials in prime characteristic. 

\begin{deff}\labelt{deuDef}
 For an integer $n>0$, define the following polynomial in $\ZZ[\lam]$:
 $$
 H\left\{n\right\}(\lam): = \sum_{i=0}^n {n\choose i}^2\lam^i
 $$
 \end{deff}
 Following \cite{MOR}, we call $H\{n\}(\lam)$ the \textit{Deuring Polynomial}\footnote{Arguably it first appeared in \cite{DUR}} of degree $n$; it can be equivalently defined using the Legendre polynomial of degree $n$, $P_n(x)$:
 \begin{equation}\labelt{DeurToLeg}
 H\{n\}(\lam)=(1-\lam)^nP_n\left(\frac{1+\lambda}{1-\lambda}\right).
 \end{equation}
 (see \cite{SELF} for more details.) When the indeterminate $\lam$ is understood from the context we omit it and write $H\{n\}$. We often abuse notation and write $H\{n\}\in \FF_p[\lam]$ for the natural image of this integer polynomial mod $p$.

We shall investigate the roots of $H\{n\}$ in characteristics $p$. The following two lemmas are proven in \cite{SELF}:

\begin{lem}\labelt{pMinusOneHass}
Let $p$ be a prime. Then $H\{p-1\}\in \FF_p[\lam]$ is $(\lam-1)^{p-1}$.
\end{lem}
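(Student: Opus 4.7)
The plan is to work directly from the explicit expression $H\{p-1\}(\lambda)=\sum_{i=0}^{p-1}\binom{p-1}{i}^2\lambda^i$ and reduce each binomial coefficient modulo $p$. The key number-theoretic input is that $\binom{p-1}{i}\equiv(-1)^i\pmod{p}$, which is immediate from writing
$$\binom{p-1}{i}=\frac{(p-1)(p-2)\cdots(p-i)}{i!}\equiv\frac{(-1)(-2)\cdots(-i)}{i!}=(-1)^i\pmod{p}.$$
Squaring gives $\binom{p-1}{i}^2\equiv 1\pmod{p}$ for every $0\le i\le p-1$.

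With this simplification the polynomial collapses to a geometric series:
$$H\{p-1\}(\lambda)\equiv\sum_{i=0}^{p-1}\lambda^i=\frac{\lambda^p-1}{\lambda-1}\pmod{p}.$$
Here I would note that the division is legitimate in $\ZZ[\lambda]$ (and hence in $\FF_p[\lambda]$) since $\lambda-1$ divides $\lambda^p-1$ as integer polynomials.

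Finally, in characteristic $p$ the Frobenius identity gives $\lambda^p-1=(\lambda-1)^p$, so
$$H\{p-1\}(\lambda)\equiv\frac{(\lambda-1)^p}{\lambda-1}=(\lambda-1)^{p-1}\pmod{p},$$
which is the desired equality in $\FF_p[\lambda]$.

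There is no real obstacle here; the argument is a short chain of elementary congruences. The only point requiring a moment of care is that the step $\sum_{i=0}^{p-1}\lambda^i=(\lambda^p-1)/(\lambda-1)$ is a genuine identity of polynomials, not merely a rational function equality, so the subsequent cancellation after applying the freshman's dream is rigorous.
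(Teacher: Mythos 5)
Your proof is correct. The paper does not include its own argument for this lemma (it is cited from the author's companion paper \cite{SELF}), so there is no in-paper proof to compare against, but your derivation is the natural elementary one: reduce each $\binom{p-1}{i}$ to $(-1)^i$ modulo $p$, square to get $\binom{p-1}{i}^2 \equiv 1$, sum the resulting geometric series $\sum_{i=0}^{p-1}\lambda^i = (\lambda^p-1)/(\lambda-1)$ as a genuine polynomial identity, and invoke the Frobenius identity $(\lambda-1)^p=\lambda^p-1$ in characteristic $p$. Every step checks out, and your aside about the quotient being a polynomial identity rather than a mere rational-function identity is precisely the point that needs to be flagged to make the cancellation rigorous.
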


\begin{lem}[\textbf{Schur's Congruence}]\labelt{hasseLucas} Fix a prime $p$. 
Let $H\{n\}\in \FF_p[\lam]$. Write the $p$-expansion of $n$:
$$
n=b_0p^0+b_1p^1+...+b_ep^e.
$$

Then
$$
H\{n\} = H\{b_0\}^{1}H\{b_1\}^{p^1}H\{b_2\}^{p^2}\cdots H\{b_e\}^{p^e}
$$
\end{lem}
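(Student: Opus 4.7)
The plan is to start from the explicit formula $H\{n\}(\lam)=\sum_{i=0}^n \binom{n}{i}^2 \lam^i$ and reduce everything modulo $p$ using Lucas' theorem on binomial coefficients.

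First I would write any $i$ with $0 \leq i \leq n$ in its base-$p$ expansion $i=c_0+c_1 p+\cdots+c_e p^e$ (with $0\leq c_j<p$). By Lucas' theorem,
$$
\binom{n}{i}\equiv \prod_{j=0}^{e}\binom{b_j}{c_j}\pmod{p},
$$
which is automatically zero unless $c_j\leq b_j$ for all $j$. Squaring gives $\binom{n}{i}^2\equiv \prod_j\binom{b_j}{c_j}^2\pmod{p}$, so the sum defining $H\{n\}$ can be reindexed over tuples $(c_0,\ldots,c_e)$ with $0\leq c_j\leq b_j$.

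Next I would observe that this reindexed sum factors, because the exponent of $\lam$ splits as a sum $i=\sum_j c_j p^j$ and the coefficient splits as a product:
$$
H\{n\}(\lam)\;\equiv\; \sum_{c_0=0}^{b_0}\cdots \sum_{c_e=0}^{b_e}\prod_{j=0}^{e}\binom{b_j}{c_j}^{2}\lam^{c_j p^j}
\;=\;\prod_{j=0}^{e}\left(\sum_{c_j=0}^{b_j}\binom{b_j}{c_j}^{2}\lam^{c_j p^j}\right)
\;=\;\prod_{j=0}^{e} H\{b_j\}\bigl(\lam^{p^j}\bigr)\pmod{p}.
$$

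Finally I would invoke the Frobenius in characteristic $p$: since $H\{b_j\}$ has coefficients in $\FF_p$, Fermat's little theorem gives $a^{p^j}=a$ for each coefficient, hence $H\{b_j\}(\lam^{p^j})=\bigl(H\{b_j\}(\lam)\bigr)^{p^j}$. Substituting this into the product yields exactly the claimed identity $H\{n\}=\prod_{j=0}^{e} H\{b_j\}^{p^j}$ in $\FF_p[\lam]$.

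The argument is essentially bookkeeping; the only place where anything could go wrong is the bijection between $i\in\{0,\ldots,n\}$ with nonvanishing Lucas product and tuples $(c_0,\ldots,c_e)$ satisfying $c_j\leq b_j$, so I would be careful to note that uniqueness of the base-$p$ expansion guarantees the double sum and the product really are equal (no $i$ is counted twice, and any $i>n$ must have some $c_j>b_j$ and so contributes $0$ anyway). After that, the only substantive input is Frobenius, which works cleanly because we are in $\FF_p[\lam]$.
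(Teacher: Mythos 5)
Your proof is correct. The paper does not actually supply a proof of this lemma; it simply cites it from an external reference, so there is no internal argument to compare against. What you give is the standard derivation of Schur's congruence: reindex via Lucas' theorem, factor the resulting multisum over digit tuples into a product of single sums, and then use the Frobenius identity $g(\lam^{p^j}) = g(\lam)^{p^j}$ for $g \in \FF_p[\lam]$. Each step is sound, and you correctly observe the two points that could otherwise be glossed over --- that the bijection between admissible digit tuples $(c_0,\ldots,c_e)$ with $c_j \le b_j$ and indices $i \in \{0,\ldots,n\}$ is guaranteed by uniqueness of base-$p$ expansion, and that any $i > n$ is harmless because Lucas kills its coefficient. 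This is almost certainly the same argument the cited source uses, so no comparison of genuinely different routes is possible here; your contribution is simply to make the lemma self-contained.
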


\begin{exm}\labelt{neHasse} In characteristic $p$, where $p$ is an odd prime:
$$
H\left\{\frac{p^e-1}{2}\right\} = H\left\{\frac{p-1}{2}\right\}^{1+p+...+p^{e-1}}
$$
\end{exm}

For the computation of our main theorems, we need the following properties of the roots of $H\{n\}$ in characteristic $p$:
\begin{lem}\labelt{Hsimple} Fix a prime $p$, and an integer $0\leq n < p/2$. Let $K$ be a field of characteristic $p$. Then $H\{n\}\in K[\lam]$ has no repeated roots. Further, $\lam=0,1$ are not roots of $H\{n\}$. Moreover, if  $0< n < p/2$ then $H\{n\}$ and $H\{n-1\}$ share no roots.
\end{lem}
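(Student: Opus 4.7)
I would prove the three assertions in sequence, with each one feeding into the next.

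\textbf{Evaluation at $0$ and at $1$.} Directly from the definition, $H\{n\}(0) = \binom{n}{0}^2 = 1$, so $\lambda = 0$ is never a root. For $\lambda = 1$, the Vandermonde--Chu identity gives $H\{n\}(1) = \sum_{i=0}^n \binom{n}{i}^2 = \binom{2n}{n}$. Since $2n < p$, writing this as $\frac{(n+1)(n+2)\cdots(2n)}{n!}$ shows that numerator and denominator are products of integers smaller than $p$, hence units modulo $p$; so $\lambda = 1$ is not a root in $K$ either.

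\textbf{No repeated roots.} The plan is to use the hypergeometric ODE satisfied by $H\{n\}(\lambda) = {}_2F_1(-n,-n;1;\lambda)$, namely
\[
\lambda(1-\lambda)\, H\{n\}''(\lambda) + \bigl[1 - (1-2n)\lambda\bigr]\, H\{n\}'(\lambda) - n^2\, H\{n\}(\lambda) = 0.
\]
This is a polynomial identity in $\ZZ[\lambda]$, so it descends to $K$. If $\lambda_0$ were a repeated root, then $H\{n\}(\lambda_0) = H\{n\}'(\lambda_0) = 0$; since $\lambda_0 \neq 0, 1$ by the previous paragraph, evaluating the ODE at $\lambda_0$ forces $H\{n\}''(\lambda_0) = 0$. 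Iteratively differentiating the ODE and evaluating at $\lambda_0$, an induction shows $H\{n\}^{(k)}(\lambda_0) = 0$ for every $k \geq 0$. But $H\{n\}$ has leading coefficient $1$, so $H\{n\}^{(n)}(\lambda) = n!$, which is a nonzero element of $K$ since $n < p$. This is a contradiction.

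\textbf{No common roots with $H\{n-1\}$.} The tool is the three-term recurrence
\[
(n+1)\, H\{n+1\}(\lambda) = (2n+1)(1+\lambda)\, H\{n\}(\lambda) - n(1-\lambda)^2\, H\{n-1\}(\lambda),
\]
which follows from the classical Legendre three-term recurrence via the substitution $x = \frac{1+\lambda}{1-\lambda}$ of \pref{DeurToLeg} and is again a polynomial identity in $\ZZ[\lambda]$. Suppose $H\{n\}(\lambda_0) = H\{n-1\}(\lambda_0) = 0$. Applying the recurrence successively at indices $m = n, n+1, \ldots, p-2$, each step requires dividing by $m+1 \in \{n+1, \ldots, p-1\}$, all of which are units modulo $p$; hence $H\{m\}(\lambda_0) = 0$ for every $n-1 \leq m \leq p-1$. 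In particular, $H\{p-1\}(\lambda_0) = 0$, and by \bref{pMinusOneHass}, $H\{p-1\}(\lambda) = (\lambda-1)^{p-1}$, so $\lambda_0 = 1$ --- contradicting the first paragraph.

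The main obstacle I anticipate is setting up the two auxiliary identities cleanly as statements in $\ZZ[\lambda]$ that descend to characteristic $p$. Both come from classical Legendre identities, but one needs to clear denominators carefully in the substitution $x = \frac{1+\lambda}{1-\lambda}$ and then track which leading coefficients (such as $n+1$ in the recurrence, or the $\lambda(1-\lambda)$ factor in the ODE) must be nonzero on the range $0 \leq n < p/2$ for the respective inductions to propagate.
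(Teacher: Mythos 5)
Your proof is correct, and the first two claims go the same way the paper does: $H\{n\}(0)=1$, $H\{n\}(1)=\binom{2n}{n}\neq 0$ since $2n<p$, and the second-order hypergeometric ODE rules out repeated roots away from $\{0,1\}$. (The paper's \bref{repRootsDELem} packages the ODE argument by factoring $H = g(\lambda)(\lambda-\alpha)^r$ and extracting $\alpha(\alpha-1)=0$, while you iterate the ODE to annihilate every derivative of $H\{n\}$ at $\lambda_0$ and then contradict $H\{n\}^{(n)} = n! \neq 0$ in $K$; these are interchangeable.) Where you genuinely diverge is the third claim. The paper's route (\bref{FsimpleNoSharedRoots}) is first-order: it invokes the identity $(1-\lambda)F' + 2nF = H\{n\}$ with $F' = H\{n-1\}$ (\bref{pascalConnection}) to rewrite the ideal $(H\{n\},H\{n-1\})$ as $(F,F')$, and then appeals to the simplicity of the roots of the antiderivative $F$ (\bref{Fsimple}, proved from its own second-order ODE). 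Your route instead uses Bonnet's three-term recurrence, transported through $x = \frac{1+\lambda}{1-\lambda}$ into the integer-coefficient identity $(m+1)H\{m+1\} = (2m+1)(1+\lambda)H\{m\} - m(1-\lambda)^2 H\{m-1\}$, to climb from $m=n$ to $m=p-2$ (each divisor $m+1 \in \{n+1,\dots,p-1\}$ is a unit), concluding that a hypothetical common root of $H\{n\}$ and $H\{n-1\}$ would be a root of $H\{p-1\} = (\lambda-1)^{p-1}$ (\bref{pMinusOneHass}), i.e.\ $\lambda_0=1$, contradicting your evaluation at $1$. This is a valid, self-contained alternative: it avoids introducing the antiderivative $F\{n\}$ and the separate simple-roots proposition entirely, at the cost of quoting the three-term Legendre recurrence as a $\ZZ[\lambda]$-identity and running the ladder argument. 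Both are clean; yours needs slightly less auxiliary scaffolding from the paper.
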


The above lemma follows from the orthogonality of Legendre polynomials and the recursive relation between them (see the author's Ph.D. thesis for more details). If the reader would like to avoid analytic techniques, we provide a purely algebraic proof of \bref{Hsimple} which reveals interesting properties of Deuring polynomials. The rest of this section is dedicated for that goal. Alternatively, the reader may skip directly to the computation of \bref{thmTwoLSimp} in the next section. 

\bigskip

In \cite{SELF}, we prove:
\begin{lem}\labelt{pascalConnection}Fix an integer $n\geq 0$.
Let $F=F(\lam)\in \QQ[\lam]$ be the formal antiderivative of the polynomial $H\{n-1\}(\lam)$ with constant coefficient $0$. We denote $H\{n-1\} = F'$. Then
$$
 (1-\lam)F' + 2nF = H\{n\}.
$$ Note that this equality holds characteristic 0 and thus in all positive characteristics $n<p$.
\end{lem}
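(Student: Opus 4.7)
The plan is to verify the identity by comparing coefficients of $\lam^j$ on both sides, since everything in sight is an explicit polynomial with combinatorial coefficients. This is perhaps the most transparent approach, avoiding any generating-function or differential-equation machinery.

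First I would write out each piece explicitly. Since $H\{n-1\}(\lam)=\sum_{i=0}^{n-1}\binom{n-1}{i}^2\lam^i$, its formal antiderivative with zero constant term is
$$F\{n-1\}(\lam)=\sum_{j=1}^{n}\frac{1}{j}\binom{n-1}{j-1}^2\lam^{j}.$$
Expanding $(1-\lam)H\{n-1\}(\lam)$ via the shift-and-subtract trick yields a coefficient of $\lam^j$ equal to $\binom{n-1}{j}^2-\binom{n-1}{j-1}^2$ (with the obvious edge cases at $j=0$ and $j=n$), while $2nF\{n-1\}(\lam)$ contributes $\frac{2n}{j}\binom{n-1}{j-1}^2$ to the same coefficient.

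Next I would dispose of the boundary cases separately: at $j=0$ only the first piece contributes, giving $1=\binom{n}{0}^2$, and at $j=n$ the two contributions sum to $-1+\frac{2n}{n}=1=\binom{n}{n}^2$. For the main range $1\leq j\leq n-1$ I would expand the target coefficient $\binom{n}{j}^2$ using Pascal's identity $\binom{n}{j}=\binom{n-1}{j}+\binom{n-1}{j-1}$, squaring to get
$$\binom{n}{j}^2=\binom{n-1}{j}^2+2\binom{n-1}{j}\binom{n-1}{j-1}+\binom{n-1}{j-1}^2.$$
Matching this against the sum of the two contributions on the left reduces, after cancelling $\binom{n-1}{j}^2$ and combining the $\binom{n-1}{j-1}^2$ terms, to the identity $(n-j)\binom{n-1}{j-1}=j\binom{n-1}{j}$, which is immediate from the factorial formulas.

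There is no real obstacle here; the only thing to be a bit careful about is bookkeeping the endpoint coefficients (the $\lam^0$ and $\lam^n$ terms), because the shift in $(1-\lam)H\{n-1\}$ makes the range of summation for the two contributions differ. Since the final identity is a polynomial identity with rational coefficients, it holds in $\QQ[\lam]$ and hence reduces modulo any prime $p$ for which no denominator vanishes, which is precisely the condition $n<p$ stated in the lemma (the only denominators introduced are $1,2,\dots,n$ from the antiderivative).
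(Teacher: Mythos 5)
Your proof is correct. The paper itself does not reprove this lemma here—it cites the companion paper \cite{SELF} for it—so there is no in-text proof to compare against, but the internal label \texttt{pascalConnection} strongly suggests the cited argument is exactly the coefficient comparison via Pascal's identity that you carry out. Your bookkeeping checks out: the endpoint coefficients $j=0$ and $j=n$ both give $1=\binom{n}{0}^2=\binom{n}{n}^2$, and in the middle range the equation collapses to $(n-j)\binom{n-1}{j-1}=j\binom{n-1}{j}$, which is indeed immediate from factorials. The remark about the denominators $1,\dots,n$ justifying reduction mod $p$ for $n<p$ is also exactly the right point to make.

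One small thing worth flagging (a minor imprecision in the lemma statement, not in your argument): the lemma says ``Fix an integer $n\geq 0$,'' but for $n=0$ the object $H\{-1\}$ is not defined, so the statement really presupposes $n\geq 1$; your split into $j=0$, $1\leq j\leq n-1$, $j=n$ implicitly does the same. This does not affect anything downstream since the lemma is only applied with $n\geq 1$.
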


We next develop differential equations for $H\{n\}$ it formal antiderivative that will help us to investigate their roots.
Once we do that, we can use the following lemma to deduce properties of their roots:

\begin{lem}\labelt{repRootsDELem} Let $K$ be a field of prime characteristic $p$. Let $F\in K[\lam]$  of degree $d<p$ and denote $F',F''$ as its first and second derivative, respectively. Suppose that $F$ satisfies a differential equation of the form 
\begin{equation}\labelt{repRootsDE}
\lam(\lam-1)F'' + a\lam F' + bF' + c F = 0, \,\,\, a,b,c \in K.
\end{equation}
Then the only possible repeating roots of $F$ are $\lam= 0$ and $\lam = 1$. 
\end{lem}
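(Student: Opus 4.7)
My plan is to argue by contradiction: suppose $\alpha \in \overline{K}$ is a root of $F$ of multiplicity $k \geq 2$ with $\alpha \neq 0, 1$, and derive $F^{(k)}(\alpha) = 0$, which is impossible since $k \leq d < p$ means $k!$ is invertible, so $F^{(k)}(\alpha) = k! \cdot G(\alpha) \neq 0$ where $F(\lam) = (\lam - \alpha)^k G(\lam)$ with $G(\alpha) \neq 0$.

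The first step is to simply plug $\lam = \alpha$ into the differential equation \eqref{repRootsDE}. Since $F(\alpha) = F'(\alpha) = 0$, the equation collapses to $\alpha(\alpha-1) F''(\alpha) = 0$, and since $\alpha \neq 0, 1$ we conclude $F''(\alpha) = 0$. This already handles the base case $k = 2$ and suggests how to proceed when the multiplicity is higher: differentiate the equation and iterate.

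The key step is then to differentiate \eqref{repRootsDE} a total of $k-2$ times using the Leibniz rule, keeping careful track of which coefficients survive. Writing $P(\lam) = \lam(\lam-1)$ and $Q(\lam) = a\lam + b$, only $P, P', P''$ and $Q, Q'$ are nonzero, so the $j$-fold derivative of \eqref{repRootsDE} has the form
$$
P(\lam) F^{(j+2)} + \bigl[jP'(\lam) + Q(\lam)\bigr] F^{(j+1)} + \bigl[\tbinom{j}{2} P'' + j Q' + c\bigr] F^{(j)} = 0.
$$
Setting $j = k-2$ and evaluating at $\alpha$, all the terms involving $F^{(k-1)}(\alpha)$ and $F^{(k-2)}(\alpha)$ vanish by the assumption on multiplicity, leaving $\alpha(\alpha-1) F^{(k)}(\alpha) = 0$. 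Since $\alpha \neq 0, 1$, this forces $F^{(k)}(\alpha) = 0$, giving the desired contradiction.

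The only mild obstacle is the bookkeeping on derivative coefficients in positive characteristic: I need $k \leq d < p$ so that the factorial $k!$ is nonzero in $K$, ensuring the usual equivalence between multiplicity and vanishing of successive derivatives. This is guaranteed by the hypothesis $d < p$, since any repeated root has multiplicity at most $d$. Once this is noted, the argument is essentially a one-line evaluation after the Leibniz expansion.
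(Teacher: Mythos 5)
Your proof is correct; the core idea is the same as the paper's, namely to use the ODE to force $\alpha(\alpha-1)$ to annihilate a quantity that is provably nonzero because the multiplicity $k$ satisfies $k \leq d < p$. The execution differs: you differentiate \eqref{repRootsDE} $k-2$ times by Leibniz (your general formula $P F^{(j+2)} + (jP' + Q)F^{(j+1)} + (\binom{j}{2}P'' + jQ' + c)F^{(j)} = 0$ checks out, since $P''' = Q'' = 0$), then evaluate at $\alpha$ and use $F^{(k-1)}(\alpha) = F^{(k-2)}(\alpha) = 0$ to isolate $\alpha(\alpha-1)F^{(k)}(\alpha) = 0$. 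The paper instead never differentiates the ODE: it writes $F = g_1(\lam-\alpha)^r$, $F' = g_2(\lam-\alpha)^{r-1}$, $F'' = g_3(\lam-\alpha)^{r-2}$ with each $g_i(\alpha) \neq 0$ (the nonvanishing of $g_2, g_3$ at $\alpha$ again needs $r < p$, since $g_2(\alpha) = r\,g_1(\alpha)$ and $g_3(\alpha) = r(r-1)g_1(\alpha)$), substitutes into \eqref{repRootsDE}, divides by $(\lam-\alpha)^{r-2}$, and evaluates at $\alpha$ to get $\alpha(\alpha-1)g_3(\alpha) = 0$. The two versions reach the same endpoint; the paper's factor-and-divide form avoids the Leibniz bookkeeping and is a bit shorter, while yours makes the role of the invertibility of $k!$ slightly more explicit via $F^{(k)}(\alpha) = k!\,G(\alpha)$.
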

\begin{proof}
 Suppose $\alp$ is a root of $F$ of multiplicity $r\geq 2$. Since $\deg F = d < p$, then $r<p$. So write
\[
\begin{array}{llcl}
F = &g_1(\lam) \cdot (\lam-\alp)^r&\text{ where }&g_1(\alp) \neq 0 ,\\
F' =&g_2(\lam) \cdot  (\lam-\alp)^{r-1}&\text{ where }&g_2(\alp)\neq 0 ,\\
F'' =&g_3(\lam) \cdot  (\lam-\alp)^{r-2}&\text{ where }&g_3(\alp) \neq 0 .\\
\end{array}
\]
Plug the above expression in (\ref{repRootsDE}) and divide by $(\lam-\alp)^{r-2}$ to get
$$
\lam(\lam-1)g_3 + (a\lam + b)(\lambda-\alpha) g_2 + c (\lam-\alp)^2g_1 = 0 .
$$Plugging in $\lam=\alp$ gives:
$$
\alp(\alp-1)g_3(\alp)= 0
$$ We get:
$$
\alp(\alp-1)= 0 \then \alp=0,1 
$$i.e. the only possible repeated roots of $F$ are $\alp=0$ or $\alp=1$.
\end{proof}

\begin{prop}\labelt{Fsimple} Fix $n\geq 0$. Let $F=F(\lam)\in \QQ[\lam]$ be the formal antiderivative of $H\{n\}\in \ZZ[\lambda]$ with constant coefficient 0. Then $F$ satisfies: 
\begin{equation}\labelt{diffOppF}
\lam(\lam-1)F'' -(1+2n)\lam F' + (n+1)^2F=0
\end{equation} Further, if $K$ is a field of prime characteristic $p$ and $0\leq n <p/2$, then $F$ has a natural image in $K[\lam]$ and it only has simple roots over $K$. 
\end{prop}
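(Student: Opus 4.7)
The plan is to verify the stated ODE by direct coefficient comparison in $\QQ[\lam]$, then invoke \bref{repRootsDELem} together with explicit checks at the two exceptional values $\lambda = 0$ and $\lambda = 1$.

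For the ODE, I would write
\[ F = \sum_{i=0}^n \frac{\binom{n}{i}^2}{i+1}\lambda^{i+1}, \quad F' = H\{n\}, \quad F'' = \sum_{i=1}^n i\binom{n}{i}^2 \lambda^{i-1}, \]
and extract the coefficient of $\lambda^k$ on both sides of the proposed identity $\lambda(\lambda-1)F'' - (1+2n)\lambda F' + (n+1)^2 F = 0$. After dividing through by $\binom{n}{k-1}^2$ and using the elementary ratio $\binom{n}{k}/\binom{n}{k-1} = (n-k+1)/k$, the relation collapses to the purely polynomial identity
\[ k(k-1) - (n-k+1)^2 - k(1+2n) + (n+1)^2 = 0, \]
which is a routine expansion. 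Every denominator that appears is of the form $i+1$ with $i+1 \le n+1 < p$, so this $\ZZ$-coefficient identity descends to $K$ without issue.

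With the ODE in hand, it fits the template of \bref{repRootsDELem} with $a = -(1+2n)$, $b = 0$, $c = (n+1)^2$, and $\deg F = n+1 < p$. That lemma forces any repeated root of $F\{n\}$ to lie in $\{0,1\}$. The point $\lambda = 0$ is simple since $F(0) = 0$ by construction while $F'(0) = H\{n\}(0) = \binom{n}{0}^2 = 1 \ne 0$ in $K$.

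The remaining and most delicate step is $\lambda = 1$. Specializing the Pascal-type relation of \bref{pascalConnection} at $\lambda = 1$ (or equivalently a Vandermonde convolution) yields $F\{n\}(1) = \binom{2n+1}{n+1}/(n+1)$. For $n < (p-1)/2$ we have $2n+1 < p$, so Kummer's theorem forces $p \nmid \binom{2n+1}{n+1}$ and the denominator is a unit, giving $F\{n\}(1) \ne 0$ and hence $\lambda = 1$ is not a root at all. In the boundary case $n = (p-1)/2$ one has $2n+1 = p$ and $\binom{p}{(p+1)/2} \equiv 0 \pmod{p}$, so $\lambda = 1$ does become a root; this is the main obstacle. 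To confirm it is still simple I compute $F'(1) = H\{n\}(1) = \binom{p-1}{(p-1)/2}$, which is congruent to $(-1)^{(p-1)/2}$ modulo $p$ by the standard identity $\binom{p-1}{k} \equiv (-1)^k \pmod{p}$, and hence nonzero. Thus $\lambda = 1$ is at worst a simple root, and the proof is complete.
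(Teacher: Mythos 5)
Your proof is correct, and its skeleton matches the paper's: verify the ODE by direct substitution (you do a coefficient-by-coefficient check, the paper says ``plug in''), invoke \bref{repRootsDELem} to localize possible multiple roots to $\{0,1\}$, and rule those out. The one place you take a longer road is at $\lambda=1$: you compute $F\{n\}(1)=\binom{2n+1}{n+1}/(n+1)$ (correct, via Vandermonde/\bref{pascalConnection}), split into $n<(p-1)/2$ versus $n=(p-1)/2$, and note that in the boundary case $1$ \emph{is} a root of $F$ but a simple one. This is all valid, but it is more than the statement requires: a multiple root of $F$ must be a root of $F'=H\{n\}$, and both $F'(0)=1$ and $F'(1)=\binom{2n}{n}$ are nonzero for $n<p/2$, which is exactly what \bref{Hsimple2} records and what the paper cites to finish immediately. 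So the determination of whether $F(1)$ itself vanishes is a correct but unnecessary detour; simplicity at $0$ and $1$ already follows from $F'$ not vanishing there.
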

\begin{proof} There is a constructive proof for the validity of \pref{diffOppF} in \cite[Lemma III.18]{pagithesis}. One can also simply plug in:
$$
F(\lam) = \sum_{i=0}^n {n \choose i}^2\frac{1}{i+1}\lam^{i+1}
$$ and observe that $F$ satisfies the differential equation (\ref{diffOppF}) in characteristic $0$ and thus in every characteristics in which $F$ can be defined. A sufficient condition is $n+1<p$ since in this case we can invert all the power of $H\{n\}$. Let $K$ be field of prime characteristic $p$ with $0\leq n < p/2$. Since for all primes $p/2 \leq p-1$, the condition $n<p/2$ guarantees that we can define $F$ in $K[\lam]$. Using \bref{repRootsDELem}, the above differential equation shows that the only possible repeating roots of $F$ is 0 and 1. However, they are not roots of $H\{n\}=F'$ as proven independently later in \bref{Hsimple2}. 
\end{proof}

\begin{lem}\labelt{DiffOppH} Let $n \geq 0$ be an integer and denote $H=H\{n\}\in \ZZ[\lam]$. Then $H$ satisfies the following differential equation:
\begin{equation}\labelt{diffOpHn}
\lam(\lam-1)H'' + (\lam(1-2n)-1)H' +n^2H=0.
\end{equation}
\end{lem}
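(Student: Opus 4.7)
The plan is to prove \pref{diffOpHn} by direct coefficient comparison, using the explicit series $H\{n\}(\lam) = \sum_{i=0}^{n} \binom{n}{i}^2 \lam^i$. Since this is an identity in $\ZZ[\lam]$, it suffices to verify it coefficient-by-coefficient over $\QQ$. I would expand each of the three terms on the left-hand side of \pref{diffOpHn} as a polynomial in $\lam$, collect like powers, and show the total coefficient of each $\lam^k$ vanishes.

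Writing $c_i = \binom{n}{i}^2$, the relevant contributions to the coefficient of $\lam^k$ come from: the term $i(i-1) c_i$ from $\lam^2 H''$ (at $i=k$), the term $-i(i-1)c_i$ from $-\lam H''$ (at $i = k+1$), the term $(1-2n) i\, c_i$ from $\lam(1-2n) H'$ (at $i=k$), the term $-i\, c_i$ from $-H'$ (at $i=k+1$), and $n^2 c_k$ from $n^2 H$. Adding these gives
$$
\bigl[k(k-1) + (1-2n)k + n^2\bigr]\,c_k \;-\; \bigl[(k+1)k + (k+1)\bigr]\, c_{k+1}
\;=\; (k-n)^2 \binom{n}{k}^2 \;-\; (k+1)^2 \binom{n}{k+1}^2 .
$$
So the whole task reduces to the identity $(n-k)^2 \binom{n}{k}^2 = (k+1)^2 \binom{n}{k+1}^2$, which is the square of the standard Pascal-type relation $(n-k)\binom{n}{k} = (k+1)\binom{n}{k+1}$. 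There is no real obstacle here; the main ``hard'' step is just keeping the index-shifts straight when two terms contribute to the same coefficient.

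As an alternative route that avoids the index bookkeeping, one can differentiate the differential equation \pref{diffOppF} of \bref{Fsimple} satisfied by $F = F\{n\}$ (whose derivative is $H = H\{n\}$). Differentiating $\lam(\lam-1)F'' - (1+2n)\lam F' + (n+1)^2 F = 0$ once in $\lam$ yields
$$
\lam(\lam-1) F''' + \bigl((1-2n)\lam - 1\bigr) F'' + \bigl((n+1)^2 - (1+2n)\bigr) F' = 0,
$$
and since $(n+1)^2 - (1+2n) = n^2$, substituting $F' = H$, $F'' = H'$, $F''' = H''$ gives exactly \pref{diffOpHn}. Either route completes the proof; I would pick the direct expansion since it is self-contained and keeps the lemma independent of \bref{Fsimple}.
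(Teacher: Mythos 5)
Your proposal is correct, and your second route is in fact exactly the paper's proof: the paper dispatches this lemma in one line by differentiating the differential equation \pref{diffOppF} for $F\{n\}$ from \bref{Fsimple} (of which $H\{n\}$ is the derivative), and your algebra confirming that $(2\lam-1)-(1+2n)\lam=(1-2n)\lam-1$ and $(n+1)^2-(1+2n)=n^2$ is exactly what makes that differentiation land on \pref{diffOpHn}. Your preferred first route, the direct coefficient comparison, is a genuinely different and self-contained argument: it reduces the identity to $(n-k)^2\binom{n}{k}^2=(k+1)^2\binom{n}{k+1}^2$, the square of the Pascal relation, with no dependence on \bref{Fsimple}. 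The tradeoff is clear: the paper's route is a one-liner but relies on having already established and verified \pref{diffOppF}, whereas yours is a few lines of index bookkeeping but can stand entirely on its own. Both are valid; the paper opted for brevity given that \bref{Fsimple} is already in place. One small point worth noting if you go with the direct expansion: state explicitly that the comparison is for all $k\geq 0$ (with $\binom{n}{k}=0$ for $k>n$, so the top coefficient also vanishes), so the reader sees the check is genuinely exhaustive.
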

\begin{proof}
Simply take the derivative of \pref{diffOppF}. 
\end{proof}

\begin{rmk} 
For example, set $n=\frac{p-1}{2}$ for an odd prime $p$, and multiply by $4$ in order to clear denominators. We get:
$$
4\lam(\lam-1)H'' + 4(\lam(2-p)-1)H' +(p-1)^2H=0.
$$
Over $\FF_p$, this equations becomes:
$$
4\lam(\lam-1)H'' + 4(2\lam-1)H' +H=0,
$$
which is identical to the \textit{Picard-Fuchs} operator (see \cite[Remark 4.2]{SILV}). In many cases $n$ is a polynomial in $p$ with rational coefficients, say $n=g(p)$. So when working in $\FF_p$, one can replace $n$ by $g(p)$, clear denominators and get a differential operator over $\FF_p$ which does not depend on $n$. 
\end{rmk}

Now we conclude the first part of \bref{Hsimple}:
\begin{cor}\labelt{Hsimple2} Fix a prime $p$, and an integer $0\leq n < p/2$. Let $K$ be a field of characteristic $p$. Then $H\{n\}\in K[\lam]$ has no repeated roots. Further, $\lam=0,1$ are not roots of $H\{n\}$.
\end{cor}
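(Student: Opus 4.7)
The plan is to extract the statement directly from the differential equation in \bref{DiffOppH} together with \bref{repRootsDELem}, and then rule out the two remaining candidate roots $\lambda=0$ and $\lambda=1$ by explicit evaluation.

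First, since $n < p/2 < p$ the polynomial $H = H\{n\}$ has degree $n < p$, so \bref{repRootsDELem} applies to the differential equation
\[
\lambda(\lambda-1)H'' + (\lambda(1-2n)-1)H' + n^2 H = 0
\]
coming from \bref{DiffOppH} (here $a = 1-2n$, $b=-1$, $c = n^2$, all lying in $K$). The lemma immediately tells us that any repeated root of $H\{n\}$ in $K$ must be $\lambda = 0$ or $\lambda = 1$. So it suffices to show that $0$ and $1$ are not roots at all.

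For $\lambda = 0$, the value $H\{n\}(0) = \binom{n}{0}^2 = 1$ is nonzero in $K$. For $\lambda = 1$, the Vandermonde–Chu identity gives
\[
H\{n\}(1) = \sum_{i=0}^{n}\binom{n}{i}^2 = \binom{2n}{n}.
\]
Since $0 \leq n < p/2$, we have $2n < p$, so neither $(2n)!$ nor $(n!)^2$ contains a factor of $p$; hence $\binom{2n}{n}$ is a unit in $\mathbb{F}_p$, and therefore nonzero in $K$.

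Combining these two evaluations with the consequence of \bref{repRootsDELem}, we conclude that $H\{n\}$ has no repeated roots in $K$ and that $\lambda = 0, 1$ are not roots. The only potential obstacle I would anticipate is ensuring that the hypotheses of \bref{repRootsDELem} are genuinely met (degree strictly less than $p$, and coefficients of the ODE lying in $K$), but both are immediate from $n < p/2$ and the integrality of the coefficients of the ODE.
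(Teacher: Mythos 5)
Your proof is correct and follows the exact same route as the paper: apply \bref{repRootsDELem} to the differential equation from \bref{DiffOppH} to restrict candidate repeated roots to $\{0,1\}$, then evaluate $H\{n\}(0)=1$ and $H\{n\}(1)=\binom{2n}{n}$ (Vandermonde), both nonzero since $2n<p$. Your extra checks (degree $<p$, ODE coefficients in $K$) are sensible but go only slightly beyond what the paper spells out.
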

\begin{proof}
Let $H=H\{n\}$. Combining \bref{DiffOppH} and \bref{repRootsDELem} shows that the only possible repeating roots of $H$ are $0$ and $1$. However, $H(0)=1$. Moreover, the following combinatorial identity (which holds over $\ZZ$) shows:
$$
H\{n\}(1)=\sum_0 ^{n} {n \choose i}^2 = {2n \choose n}.
$$
This is non-zero because $2n<p$, thus $\lam=1$ is not a root of $H$ as well.
\end{proof}

Now we conclude the last part of \bref{Hsimple}:
\begin{cor}\labelt{FsimpleNoSharedRoots}
Fix an integer $n\geq 1$ and a prime $p$ \st $n<p/2$. Let $K$ be a field of characteristic $p$. Then $H\{n\}$ and $H\{n-1\}$ share no roots.
\end{cor}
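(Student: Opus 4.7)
The plan is to combine the recursion in \bref{pascalConnection} with the simple-root property from \bref{Fsimple}. Explicitly, set $F = F\{n-1\}$, so that $F' = H\{n-1\}$, and recall that
\[
(1-\lam)F' + 2nF = H\{n\}.
\]
Since $n \geq 1$ and $n < p/2$, in particular $0 \leq n-1 < p/2$, so $F\{n-1\}$ is a well-defined element of $K[\lam]$ and \bref{Fsimple} applies to it: all of its roots are simple.

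Now I would argue by contradiction. Suppose $\alpha \in \overline{K}$ is a common root of $H\{n\}$ and $H\{n-1\}$. Plugging $\lam = \alpha$ into the displayed identity gives
\[
(1-\alpha)\cdot 0 + 2n F\{n-1\}(\alpha) = 0.
\]
Because $n < p/2$ forces $2n \not\equiv 0 \pmod p$, this forces $F\{n-1\}(\alpha) = 0$. But then $\alpha$ is a root of both $F\{n-1\}$ and its derivative $F\{n-1\}' = H\{n-1\}$, meaning $\alpha$ is a repeated root of $F\{n-1\}$. This directly contradicts the simple-root conclusion of \bref{Fsimple}, completing the proof.

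There is no serious obstacle here beyond checking the hypotheses for the cited results: the bound $n-1 < p/2$ is needed so that \bref{Fsimple} applies to $F\{n-1\}$, and the bound $n < p/2$ is needed so that the coefficient $2n$ is invertible in $K$. Both follow immediately from the assumption $n < p/2$. The real content of the corollary therefore lies entirely in the already-established facts that $F\{n-1\}$ has simple roots and that its derivative is exactly $H\{n-1\}$; the recursion of \bref{pascalConnection} simply bridges a common root of consecutive Deuring polynomials to a repeated root of the antiderivative.
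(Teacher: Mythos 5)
Your proof is correct and uses exactly the same ingredients as the paper: apply \bref{pascalConnection} to $F = F\{n-1\}$, invoke \bref{Fsimple} to get simple roots of $F$, and use invertibility of $2n$. The paper phrases the argument ideal-theoretically, showing $(H\{n\}, H\{n-1\}) = (F, F')$ is the unit ideal, while you unwind this into a direct contradiction at a hypothetical common root $\alpha$; these are the same reasoning.
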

\begin{proof}
Let $F$ be the formal antiderivative of $H\{n-1\}$ with constant coefficient 0.
Consider the ideal $I=(H\{n\},H\{n-1\})$ in $K[\lam]$. From \bref{pascalConnection} we have:
$$
I=(H\{n\},H\{n-1\}) = ((1-\lam) F' + 2n F,F') = (2n F, F') = (F,F'),
$$where the last inequality holds since $2n$ is a unit in $\FF_p$ and thus in $K$. 
Therefore, $I$ is the unit ideal \tiff $F$ only has simple roots, which is the result in \bref{Fsimple}.
\end{proof}

\section{Computation of the $F$-pure threshold}

We start this section with two useful observations for computing $FT(f)$. Let $K$ be a field. A polynomial $f\in K[x_1,...,x_t]$ is a linear combination of monomials over $K$. Denote the monomial $x_1^{\mu_1}\cdots x_t^{\mu_t}$ by $\textbf{x}^{\bm{\mu}}$ where $\bm\mu$ is the multiexponent $[\mu_1,...,\mu_t]$. Similarly, for $s$ scalars in $K$, $b_1,...,b_s$, we denote $\textbf{b}=[b_1,...,b_s]$. Now, let $\textbf{x}^{\bm{\mu}_1},...,\textbf{x}^{\bm{\mu}_s}$ be the monomials of $f$. Using the usual meaning of dot product we have:
$$
f = \textbf{b} \cdot [\textbf{x}^{\bm{\mu}_1}, ..., \textbf{x}^{\bm{\mu}_s}] = b_1\textbf{x}^{\bm{\mu}_1} + ...  + b_s\textbf{x}^{\bm{\mu}_s}.
$$ 
For a multi-exponent $\textbf{k}=[k_1,...,k_t]$ we denote $\max \textbf{k}$ as the maximal power in the multiexponent $\textbf{k}$, i.e. 
$$
\max \textbf{k}=\max [k_1,...,k_t] = \max_{1\leq i \leq t} k_i.
$$
Using this notation, we have the following straightforward way to produce upper and lower bounds for $FT(f)$:
\begin{lem}\labelt{upLowBound}
Let $R=K[x_1,...,x_t]$ where $K$ is a field of prime characteristics $p$, and let $f\in R$. Let $N$ be a positive integer. Raise $f$ to the power of $N$ and collect all monomials, so that:
\begin{equation} \labelt{eqFtoNcollected}
f^N = \sum_{\text{distinct multi-exponents } \textbf{k}} c_{\textbf{k}} \textbf{x}^{\textbf{k}}.
\end{equation}
Note that all but finitely many $c_{\textbf{k}}$'s are 0. 
 Fix $e\in \ZZ_{\geq 0}$ and consider $\frac{N}{p^e}$. Then:
\begin{enumerate}
\item $ \frac{N}{p^e}<FT(f) \iff \exists \textbf{k}$ \st $c_{\textbf{k}}\neq 0$ and $\max \textbf{k} < p^e$.
\item $FT(f)\leq \frac{N}{p^e} \iff \forall \textbf{k}$, either $c_{\textbf{k}}= 0$ or $\max \textbf{k} \geq p^e$.
\end{enumerate}
\end{lem}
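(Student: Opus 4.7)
The plan is to first observe the fundamental equivalence
$$
f^N\in(x_1^{p^e},\ldots,x_t^{p^e}) \iff \text{every monomial } \mathbf{x}^{\mathbf{k}} \text{ appearing in } f^N \text{ has } \max\mathbf{k}\geq p^e,
$$
which is immediate because $(x_1^{p^e},\ldots,x_t^{p^e})$ is a monomial ideal, and a polynomial lies in a monomial ideal iff each of its monomials does. With this, the right-hand sides of (1) and (2) become the statement that $f^N$ does not lie (respectively, does lie) in $(x_1^{p^e},\ldots,x_t^{p^e})$. Since (1) and (2) are logical contrapositives of one another, it suffices to prove (2).

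For the direction $(\Leftarrow)$ of (2), the key tool is the Frobenius identity $g\in(x_1^{p^e},\ldots,x_t^{p^e})\Leftrightarrow g^{p^r}\in(x_1^{p^{e+r}},\ldots,x_t^{p^{e+r}})$, which in characteristic $p$ follows at once from the Freshman's Dream expansion $g^{p^r}=\sum c_{\mathbf{k}}^{p^r}\mathbf{x}^{p^r\mathbf{k}}$: both sides amount to requiring every nonzero monomial to have some exponent reaching the threshold. From $f^N\in(x_1^{p^e},\ldots,x_t^{p^e})$ this gives $f^{Np^r}\in(x_1^{p^{e+r}},\ldots,x_t^{p^{e+r}})$ for every $r\geq 0$, and for $e'\leq e$ a pair $(M,e')$ with $f^M\notin(x_1^{p^{e'}},\ldots)$ lifts via the same identity to $f^{Mp^{e-e'}}\notin(x_1^{p^e},\ldots)$. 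Either case forces $M/p^{e'}<N/p^e$, so taking the supremum yields $FT(f)\leq N/p^e$.

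The main obstacle is the direction $(\Rightarrow)$ of (2): from $f^N\notin(x_1^{p^e},\ldots,x_t^{p^e})$ I need the \emph{strict} inequality $FT(f)>N/p^e$, not merely $\geq$. Simply placing $N/p^e$ in the sup set yields only $\leq$; the plan is to exhibit a concrete element of the sup set strictly exceeding $N/p^e$. Choose a monomial $\mathbf{x}^{\mathbf{k}}$ of $f^N$ with $c_{\mathbf{k}}\neq 0$ and $\max\mathbf{k}<p^e$, a monomial $\mathbf{x}^{\mathbf{m}}$ of $f$ with $b_{\mathbf{m}}\neq 0$, and an integer $d$ with $p^d>\deg f$. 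In $f^{Np^d+1}=(f^N)^{p^d}\cdot f$ the exponent $p^d\mathbf{k}+\mathbf{m}$ receives a \emph{unique} contribution with coefficient $c_{\mathbf{k}}^{p^d}b_{\mathbf{m}}\neq 0$, since the bound $\deg f<p^d$ forces the decomposition of any exponent as $p^d\mathbf{k}'+\mathbf{m}'$ to be unique coordinatewise. Its maximal coordinate is at most $p^d(p^e-1)+\deg f<p^{e+d}$, hence $f^{Np^d+1}\notin(x_1^{p^{e+d}},\ldots,x_t^{p^{e+d}})$, producing the element $(Np^d+1)/p^{e+d}>N/p^e$ in the sup set and completing (2), hence (1).
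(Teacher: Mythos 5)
Your proof is correct, but it takes a genuinely different route from the paper. The paper dispatches the lemma in one line by citing \cite[Prop.\ 3.26]{KSbasic}, which already packages the equivalence $f^N \notin (x_1^{p^e},\ldots,x_t^{p^e}) \iff N/p^e < FT(f)$; combined with the elementary observation that a polynomial lies in a monomial ideal iff each of its monomials does, the statement falls out immediately. You instead prove that equivalence from scratch, and this exposes the two mechanisms that the citation hides. For $FT(f) \le N/p^e$ you correctly use the Freshman's Dream to propagate $f^N \in (x_1^{p^e},\ldots)$ to $f^{Np^r} \in (x_1^{p^{e+r}},\ldots)$ and its contrapositive, covering $e' \ge e$ and $e' < e$ separately. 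The genuinely delicate part is converting non-membership into a \emph{strict} inequality, and your device is nice: with $p^d > \deg f$, the Euclidean uniqueness of the decomposition $p^d\mathbf{k}'+\mathbf{m}'$ forces the coefficient of $\mathbf{x}^{p^d\mathbf{k}+\mathbf{m}}$ in $(f^N)^{p^d}\cdot f$ to be exactly $c_{\mathbf{k}}^{p^d} b_{\mathbf{m}} \neq 0$, giving $f^{Np^d+1} \notin (x_1^{p^{e+d}},\ldots)$ and hence $(Np^d+1)/p^{e+d} > N/p^e$ in the sup set. The paper's approach buys brevity by outsourcing to a standard reference; yours buys self-containment and makes clear exactly where characteristic $p$ enters (Frobenius for $\le$, and the $p^d$-rescaling for the strict jump). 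Both are valid; yours would also be suitable if one wanted the note to be readable without the external reference.
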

\begin{proof}
This is immediate from the definition, specifically \pref{FTdef}, and from \cite[Prop 3.26]{KSbasic} which implies that for any $\frac{N}{p^e}\in [0,1]$, $$f^N \not\in (x_1^{p^e},...,x_t^{p^e})R \iff \frac{N}{p^e} < FT(f).$$
\end{proof}

\begin{lem}\labelt{betaBoundHomog} Let $f$ be a homogeneous polynomial of degree $d$ in $t$ variables. Let $\textbf{x}^{\textbf{k}}$ be a monomial in $f^N$ with a non-zero coefficient. Denote $\textbf{k}=[k_1,...,k_t]$. Then $k_1+...+k_t=dN$. Moreover, $\max \textbf{k}\geq Nd/t$ and if $\max \textbf{k} = Nd/t$ then $\textbf{k} = [Nd/t,Nd/t,...,Nd/t]$.
\end{lem}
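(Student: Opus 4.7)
The plan is to handle the two assertions in sequence, both via elementary combinatorial reasoning on multi-exponents.

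First I would observe that since $f$ is homogeneous of degree $d$, the power $f^N$ is homogeneous of degree $dN$. Indeed, any monomial of $f^N$ is a product of $N$ monomials of $f$, each contributing exactly $d$ to the total degree. Hence for any multi-exponent $\textbf{k} = [k_1,\ldots,k_t]$ that appears with a nonzero coefficient, we have $k_1 + \cdots + k_t = dN$, which is the first assertion.

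Next, for the lower bound on $\max \textbf{k}$, I would simply invoke the pigeonhole/averaging principle: the maximum of $t$ nonnegative integers is at least their average, so
\[
\max \textbf{k} \;\geq\; \frac{k_1+\cdots+k_t}{t} \;=\; \frac{dN}{t}.
\]
For the equality case, suppose $\max \textbf{k} = dN/t$. Then $k_i \leq dN/t$ for every $i$, while $\sum k_i = dN = t \cdot (dN/t)$. If any $k_j$ were strictly less than $dN/t$, the sum would be strictly less than $t \cdot (dN/t) = dN$, a contradiction. Hence $k_i = dN/t$ for every $i$, proving $\textbf{k} = [dN/t,\ldots,dN/t]$.

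I do not expect any real obstacle here: the statement is purely about the support of a homogeneous polynomial, and no characteristic-$p$ subtleties (such as vanishing of multinomial coefficients) enter, since the conclusion only concerns multi-exponents of monomials with \emph{nonzero} coefficient. The only mild point worth noting in the writeup is that $dN/t$ need not be an integer in general, but the inequality $\max \textbf{k} \geq dN/t$ still makes sense and the equality case forces $dN/t \in \ZZ_{\geq 0}$ automatically.
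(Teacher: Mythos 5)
Your proof is correct and follows the same approach as the paper: homogeneity of $f^N$ gives the sum, the averaging/pigeonhole bound gives $\max\textbf{k}\geq dN/t$, and the equality case follows by the same contradiction on the total degree.
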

\begin{proof}
 The first statement is immediate since any monomial of $f^N$ is of degree $dN$. Ergo, we cannot have that all $t$ entries of $\textbf{k}$ are less than $Nd/t$. Lastly, if $\max \textbf{k} = Nd/t$ but another power is less, then $k_1+...+k_t$ is less than $Nd$. 
\end{proof}

We can now focus on the polynomials appearing in our main theorems. Let $f$ be a bivariate degree four homogeneous polynomial. We would like to reduce the problem of computing $FT(f)$ of this quite general polynomial to a problem of computing the $F$-pure Threshold of a more ``canonical" polynomial.

\begin{prop}\labelt{fiveForms} Let $f \in K[x,y]$ be a degree four homogeneous polynomial over a field $K$ of characteristic $p$. Then $FT(f)$ is identical to the $F$-pure threshold of one of the following polynomials:
\begin{equation}\labelt{fiveFormsEq}
x^4, x^3y, x^2y^2, x^2y(x+y), xy(x+y)(x+a y) \text{ with } a \in \overline{K}-\{0,1\}.
\end{equation}

\end{prop}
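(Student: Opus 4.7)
The plan is to show that $FT(f)$ is invariant under (i) multiplication by a nonzero scalar, (ii) linear change of variables in $K[x,y]$, and (iii) extension of the base field $K$. Once these three invariances are established, the proposition reduces to classifying degree-four homogeneous bivariate forms up to these operations, and verifying that each equivalence class contains one of the five listed representatives.

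For (i), invariance is immediate from \pref{FTdef} since $(cf)^N = c^N f^N$ and $c^N \neq 0$. For (iii), \bref{upLowBound} reduces membership in $(x^{p^e}, y^{p^e})R$ to vanishing of the coefficients $c_{\textbf{k}}$ with $\max \textbf{k} < p^e$; these coefficients live in $K$ regardless of whether we view $R$ over $K$ or over an extension. For (ii), the key observation is that in characteristic $p$ the Frobenius is additive, so $(\alpha x + \beta y)^{p^e} = \alpha^{p^e} x^{p^e} + \beta^{p^e} y^{p^e}$. Therefore for any $K$-basis $L_1, L_2$ of the linear forms $Kx + Ky$ one has $(L_1^{p^e}, L_2^{p^e})R = (x^{p^e}, y^{p^e})R$, i.e., the ideal $\mathfrak{m}^{[p^e]}$ depends only on $\mathfrak{m} = (x,y)R$ and not on the chosen linear generators. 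Consequently any linear change of variables preserves the condition $f^N \in (x^{p^e}, y^{p^e})R$ and hence preserves $FT(f)$.

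Having these invariances, base extend to $\overline{K}$ and factor $f = c \, L_1 L_2 L_3 L_4$ as a product of four linear forms. Grouping by proportionality partitions the forms according to the multiplicity pattern of the four roots of $f$ in $\PP^1_{\overline{K}}$; the possible patterns are $(4), (3,1), (2,2), (2,1,1), (1,1,1,1)$. In each case I apply an invertible linear change of variables, exploiting the three-transitive action of $\mathrm{PGL}_2$ on $\PP^1$, together with an independent rescaling of $x$ and $y$. Explicitly: in case $(4)$ send the unique form to $x$, giving $f \propto x^4$; in case $(3,1)$ send the triple to $x$ and the simple to $y$, giving $f \propto x^3 y$; in case $(2,2)$ send the two double roots to $x$ and $y$, giving $f \propto x^2 y^2$; in case $(2,1,1)$ send the double to $x$ and one simple to $y$, then rescale $x, y$ independently so that the remaining simple form $\alpha x + \beta y$ (with $\alpha, \beta \neq 0$) becomes a scalar multiple of $x+y$, giving $f \propto x^2 y (x+y)$; in case $(1,1,1,1)$ send three of the forms to $x, y, x+y$, leaving the fourth as $\alpha x + \beta y$ with $\alpha, \beta \neq 0$ and $\alpha \neq \beta$, hence proportional to $x + a y$ for some $a \in \overline{K} - \{0,1\}$. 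Combined with scalar invariance, this matches exactly the list in \pref{fiveFormsEq}.

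The one nontrivial point is the invariance under linear change of variables, which genuinely requires the characteristic-$p$ fact that the Frobenius power $\mathfrak{m}^{[p^e]}$ depends only on $\mathfrak{m}$, not on the chosen linear generating set; the remainder of the argument is routine case-work on the $\mathrm{PGL}_2$-orbits of effective divisors of degree four on $\PP^1$.
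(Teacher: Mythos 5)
Your proposal is correct and takes the same route as the paper: the paper's proof simply asserts that $FT(f)$ is preserved under base change, scalar multiplication, and linear change of variables, then invokes the reduction to the five standard forms over $\overline{K}$. You have merely filled in the details that the paper leaves implicit (the Frobenius‐power argument $(L_1^{p^e},L_2^{p^e})=(x^{p^e},y^{p^e})$ for linear invariance, the monomial‐ideal argument for base change, and the explicit $\mathrm{PGL}_2$ case analysis by multiplicity pattern), all of which are sound.
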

\begin{proof}
$FT(f)$ is preserved under base change, scalar multiplication and linear change of variables. Thus, \twlogd, let $K$ be algebraically closed, over which $f$ factors as a product linear terms. Now change variables to obtains one of the five forms in \pref{fiveFormsEq}, and suffices to compute $FT(f)$ for each of these cases.
\end{proof}

We are interested in the last form, since the $F$-pure threshold can be computed easily in the rest of the cases. For completeness, we comment about them in \bref{restForms}.

The next lemma shows that understanding the Deuring polynomial $H\{n\}$ is crucial for the discussion.

\begin{lem}[\textbf{Main Technical Lemma}]\labelt{hassePolyGeneral}
 Let $f_\lam=(x+y)(x+\lam y)$ and let $N$ be a positive integer. Then the coefficient of $x^{N}y^{N}$ in $f_\lam^N$ is $H\{N\}(\lam)$.
\end{lem}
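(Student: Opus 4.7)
The plan is to expand $f_\lambda^N = (x+y)^N (x+\lambda y)^N$ using the binomial theorem on each factor separately and then read off the coefficient of $x^N y^N$ directly.

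First I would write
\[
(x+y)^N = \sum_{j=0}^N \binom{N}{j} x^{N-j} y^j \quad\text{and}\quad (x+\lambda y)^N = \sum_{k=0}^N \binom{N}{k} \lambda^k x^{N-k} y^k.
\]
Multiplying these together produces a double sum whose general term is $\binom{N}{j}\binom{N}{k}\lambda^k \, x^{2N-j-k} y^{j+k}$. The monomial $x^N y^N$ appears precisely when $j+k = N$, i.e.\ $k = N-j$.

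Extracting just those terms and using $\binom{N}{N-j} = \binom{N}{j}$, the coefficient of $x^N y^N$ becomes
\[
\sum_{j=0}^N \binom{N}{j}\binom{N}{N-j}\lambda^{N-j} = \sum_{j=0}^N \binom{N}{j}^2 \lambda^{N-j}.
\]
Reindexing by $i = N-j$ gives $\sum_{i=0}^N \binom{N}{i}^2 \lambda^i$, which is exactly $H\{N\}(\lambda)$ by the definition in \bref{deuDef}.

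There is really no obstacle here — the lemma is essentially a direct binomial expansion combined with the Vandermonde-style pairing of indices. The only thing worth flagging carefully is the bookkeeping of the reindexing and the symmetry $\binom{N}{j} = \binom{N}{N-j}$ that turns $\lambda^{N-j}$ into $\lambda^{i}$ with the squared binomial coefficient intact; this is what makes the answer a symmetric polynomial in the coefficients of $\lambda^i$ and $\lambda^{N-i}$.
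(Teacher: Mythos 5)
Your proof is correct and follows essentially the same route as the paper: expand each factor by the binomial theorem and read off the coefficient of $x^N y^N$. The only cosmetic difference is your choice of summation indices, which forces the extra $\binom{N}{j}=\binom{N}{N-j}$ symmetry and a reindexing $i=N-j$ at the end, whereas the paper indexes the two factors so that the constraint becomes $i=j$ and the sum $\sum_i \binom{N}{i}^2\lambda^i$ appears immediately.
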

\begin{proof}
Notice:
$$
f_\lam^N = (x+y)^{N}(x+\lam y)^{N} = \left(\sum_{i=0}^{N} {N \choose i} x^{i}y^{N-i}\right) \left(\sum_{j=0}^{N} {N\choose j} (\lam)^{j}x^{N-j}y^{j}\right).
$$
For the coefficient of $x^{N}y^{N}$ we need to set $i=j$, so we end up with:
$$
\sum_{i=0}^{N} {N \choose i}^2 \lam^{i} = H\{N\}.
$$
As required.
\end{proof}

Consider the statements in \bref{mainThm} and \bref{thmTwoL}, and let us reduce them to a more computationally friendly theorem. Since the $F$-pure threshold is invariant under base change and linear change of variables, we can assume $K=\overline{K}$. In light of \bref{fiveForms}, the polynomials in these statements can be fixed to have the form $xy(x+y)(x+a y)$ or $x^by^b(x+y)^c(x+a y)^c$ respectively, as it is easy to see that $a$ is the cross-ratio of the roots once we fix an order and that $a$ cannot be $0,1$ or $\infty$ since the roots are all distinct. The equivalent statement we get is:
\iffalse
\begin{theorem}\labelt{mainThmSimp} Let $K$ be a field of prime characteristic $p$. Fix a polynomial in $K[x,y]$:
$$
f_a = xy(x+y)(x+a y),\, a\in K-\{0,1\}.
$$ Denote $n_1 = \frac{p-1}{2}$, and let $H\{n_1\}(\lam)\in K[\lam]$ be the Deuring polynomial of degree $n_1$. Then 
$$
FT(f_a) = \left\{
\begin{array}{ll}
\frac{1}{2}& \text{ if } p=2 \text{ or if both } p>2 \text{ and } H\{n_1\}(a) \neq 0 \\
\frac{1}{2}\left(1-\frac{1}{p}\right)& \text{ if } p>2 \text{ and } H\{n_1\}(a) = 0
\end{array}\right.
$$
\end{theorem}
\fi

\begin{restatable}{theorem}{thmTwoSimp}\labelt{thmTwoLSimp} Let $K$ be a field of prime characteristic $p$. Let $c,b\in \ZZ_{>0}$ with $p \equiv 1 \pmod{b+c}$. Fix $f\in K[x,y]$ of the form:
\begin{equation}
f_a = x^by^b(x+y)^c(x+a y)^c,\, a\in K-\{0,1\},\, 
\end{equation}
Denote $n=\frac{c}{c+b}(p-1)$ and let $H\{n\}(\lam)\in K[\lam]$ be the Deuring polynomial of degree $n$. Then
 $$
FT(f_a) = \left\{
\begin{array}{ll}
\frac{1}{b+c}& \text{ if } H\{n\}(a) \neq 0 \\
\frac{1}{b+c}\left(1-\frac{1}{p}\right)& \text{ if } H\{n\}(a) = 0
\end{array}\right.
$$
\end{restatable}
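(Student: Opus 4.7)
The plan is to apply \bref{upLowBound} and \bref{betaBoundHomog} by producing or excluding nonzero monomials in $f_a^N$. Writing $f_a^N = (xy)^{bN}g^{cN}$ with $g = (x+y)(x+ay)$, the Main Technical Lemma (\bref{hassePolyGeneral}) identifies the coefficient of the central monomial $x^{(b+c)N}y^{(b+c)N}$ in $f_a^N$ as $H\{cN\}(a)$. Since $f_a$ is homogeneous of degree $2(b+c)$, \bref{betaBoundHomog} already yields $FT(f_a)\le 1/(b+c)$. Set $q := (p-1)/(b+c)$, so $n=cq$.

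When $H\{n\}(a)\ne 0$, I take $N = N_e := (p^e-1)/(b+c)$, for which the central exponents equal $p^e-1 < p^e$. Since $cN_e = n(1+p+\cdots+p^{e-1})$, Schur's congruence (\bref{hasseLucas}) gives the central coefficient as $H\{n\}(a)^{1+p+\cdots+p^{e-1}} \ne 0$. Applying \bref{upLowBound}(1) and letting $e\to\infty$ produces $FT(f_a) \ge 1/(b+c)$, matching the upper bound.

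When $H\{n\}(a) = 0$, I use $N = q$ for the upper bound: by \bref{betaBoundHomog} the only monomial of $f_a^q$ with both exponents below $p$ is the central one $x^{p-1}y^{p-1}$, whose coefficient $H\{n\}(a)$ vanishes. Hence $f_a^q \in (x^p, y^p)$, and the Frobenius identity $(x^p,y^p)^{[p^{e-1}]} = (x^{p^e},y^{p^e})$ yields $f_a^{qp^{e-1}} = (f_a^q)^{p^{e-1}} \in (x^{p^e},y^{p^e})$, so $FT(f_a) \le q/p$ by \bref{upLowBound}(2).

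For the matching lower bound, I take $N = qp^{e-1} - 1$ (so $N/p^e \to q/p$ from below) and compute the central coefficient $H\{np^{e-1}-c\}(a)$ for $e \ge 2$. The base-$p$ expansion of $np^{e-1} - c$, obtained via the borrow chain, is $(p-c, p-1, \ldots, p-1, n-1)$, so Schur's congruence together with $H\{p-1\} = (\lambda-1)^{p-1}$ (\bref{pMinusOneHass}) yields
\[
H\{np^{e-1}-c\}(a) = H\{p-c\}(a) \cdot (a-1)^{(p-1)(p + p^2 + \cdots + p^{e-2})} \cdot H\{n-1\}(a)^{p^{e-1}}.
\]
The middle factor is nonzero since $a \ne 1$, and \bref{FsimpleNoSharedRoots} gives $H\{n-1\}(a) \ne 0$ whenever $n < p/2$ (equivalently $c \le b$). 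The main obstacle will be showing $H\{p-c\}(a) \ne 0$, along with handling the case $c > b$ where $n \ge p/2$ and \bref{FsimpleNoSharedRoots} must be extended. I expect both to follow from differential-equation techniques analogous to \bref{DiffOppH}--\bref{repRootsDELem} applied to $H\{p-c\}$: \bref{repRootsDELem} forces the repeated roots of $H\{p-c\}$ into $\{0,1\}$, and a further argument should show that $H\{n\}$ and $H\{p-c\}$ share no root other than possibly $\lambda = 1$, which is excluded by hypothesis. Granting this non-vanishing, \bref{upLowBound}(1) produces $FT(f_a) > (qp^{e-1}-1)/p^e \to q/p$, completing the proof.
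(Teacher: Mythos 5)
The proposal matches the paper for the upper bound $\frac{1}{b+c}$, for the case $H\{n\}(a)\neq 0$, and for the upper bound $\frac{q}{p}$ in the vanishing case, using \bref{hassePolyGeneral}, \bref{betaBoundHomog}, and Schur's congruence in essentially the same way. The genuine gap is in the lower bound for the case $H\{n\}(a)=0$. You take $N=qp^{e-1}-1$, so that $cN = np^{e-1}-c$ has base-$p$ digits $(p-c, p-1, \ldots, p-1, n-1)$, and then Schur's congruence forces you to control $H\{p-c\}(a)$. Neither \bref{repRootsDELem} nor \bref{FsimpleNoSharedRoots} comes close to giving this: \bref{repRootsDELem} only constrains repeated roots of a single Deuring polynomial, and \bref{FsimpleNoSharedRoots} only rules out common roots of \emph{consecutive} $H\{n\}$, $H\{n-1\}$, whereas $p-c$ and $n$ are not adjacent in general. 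Your "I expect both to follow'' is not a proof sketch so much as a wish, and in fact I see no reason $H\{n\}$ and $H\{p-c\}$ should be coprime in $\overline{\FF_p}[\lam]$ for arbitrary $b,c$.

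The paper sidesteps this entirely by choosing $N$ differently. It starts from $N' = np^{e-1}-1$, whose digits are all $p-1$ except for a leading $n-1$, and then subtracts $\ell(m)=(p-n)p^{md-1}$ for $m=1,\dots,c-1$ (where $p^d\equiv 1 \pmod c$) to make the result divisible by $c$. Each such subtraction flips a $p-1$ digit to $n-1$ \emph{without creating any new digit values}, so $cN$ has only digits $p-1$ and $n-1$. Then Schur's congruence reduces the central coefficient to a product of powers of $H\{p-1\}(a)=(a-1)^{p-1}\neq 0$ and $H\{n-1\}(a)\neq 0$, both already under control via \bref{pMinusOneHass} and \bref{Hsimple}/\bref{FsimpleNoSharedRoots}. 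This extra bookkeeping with the $\ell(m)$'s is exactly what your simpler choice of $N$ misses, and without it the argument does not close. (The secondary caveat you raise about $n\geq p/2$ when $c>b$ is a fair observation --- the paper invokes \bref{Hsimple} without addressing it, presumably relying implicitly on the Möbius symmetry swapping $b$ and $c$ --- but that is a shared issue, not a flaw specific to your approach.)
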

As long as $p\neq 2$, \bref{mainThm} is a special case of \bref{thmTwoLSimp} in which $b=c=1$. Note also that the $p\neq 2, b=c=1$ scenario is also provable by applying \cite[Theorem 3.5]{HNWZ16} with $a=L=1,b=2$; however the computation is not direct. The proof of the general \bref{thmTwoLSimp} follows next where the $p=2$ special case is proven right after. 

\begin{proof}
The key observation is that for a positive integer $N$, we use \bref{hassePolyGeneral} and \bref{betaBoundHomog} to deduce:
\begin{equation}\labelt{keyObPlcG}
f_a^N = x^{bN}y^{bN}((x+y)(x+a y))^{cN} = x^{(b+c)N}y^{(b+c)N}H\{cN\}(a) + I 
\end{equation}
where $I$ is an element in the ideal $(x^{(b+c)N+1},y^{(b+c)N+1})$.
Let us prove that $1/(b+c)$ is an upper bound. Fix an integer $e>0$ and set $N=\frac{1}{b+c}(p^e-1+p-1)$. From \pref{keyObPlcG}, combined with \bref{upLowBound}, we get the $N/p^e=\frac{1}{b+c}\frac{p^e+p-2}{p^e}$ is an upper bound. Taking $e\to \infty$, we get that $FT(f_a)\leq \frac{1}{b+c}$ as required. 

In the case that $H\{n\}(a)\neq 0$, we wish to show that $\frac{1}{b+c}$ is also a lower bound. With $e>0$ and $N=\frac{1}{b+c}(p^e-1)$, the coefficient of $x^{(b+c)N}y^{(b+c)N}$ in $f_a^N$ is $H\{cN\}(a)$. Since $(b+c)N = p^e-1<p^e$, showing that $H\{cN\}(a)\neq 0$ would establish $N/p^e = \frac{1}{b+c}\frac{p^e-1}{p^e}$ as a lower bound for any $e>0$, and thus $\frac{1}{b+c} \leq FT(f_a)$. Let us compute the $p$-expansion of $cN$:
$$
cN = \frac{c}{b+c}(p^e-1) = \frac{c(p-1)}{b+c} +\frac{c(p-1)}{b+c}p + ... +\frac{c(p-1)}{b+c}p^{e-1} 
$$
Note that $n=\frac{c(p-1)}{b+c}$ is between $0$ and $p-1$. Ergo, by \bref{hasseLucas}
\begin{equation}\labelt{eqForq}
H\{cN\}(a) = H\{\frac{c}{b+c}(p^e-1)\}(a)= \left(H\{n\}(a)\right)^{g} \neq 0
\end{equation} where $g$ is the resulting positive integer exponent (its exact value is not important). In the case that $H\{n\}(a)= 0$, we would like to show that $FT(f_a) = \frac{1}{b+c}\left(1-\frac{1}{p}\right)$. To establish that value as an upper bound, consider again  $N=\frac{1}{b+c}(p^e-1)$. From (\ref{keyObPlcG}) and (\ref{eqForq}) we see that $H\{cN\}(a)=0$ and thus $f_a^N \in (x^{p^e},y^{p^e})$, making $N/p^e$ an upper bound. Plug in $e=1$ to see that $\frac{1}{b+c}\left(1-\frac{1}{p}\right)$ is indeed an upper bound.

As for a lower bound, first apply an appropriate change of coordinates, if needed, to ensure that $b\geq c$. Now, recall \bref{Hsimple}. Note that $n<p/2$ and $H\{n\}(a)= 0$, thus $H\{n-1\}(a)\neq 0$. Since $c<p$, and $p$ is a prime, there is a power of $p$ that is congruent to $1$ mod $c$. Denote it as $p^{d}$. For an integer $m$, $p^{md} \equiv 1 \pmod c$ and thus we define:
$$
\ell(m):=(p-n)p^{md-1} \equiv 1 \pmod c,
$$ because $c$ divides $n$. 

Now, consider the integer
$$
N' = (p-1)p^0+(p-1)p^1 +...+(p-1)p^{e-2}+(n-1)p^{e-1} = p^{e-1}-1+(n-1)p^{e-1}=np^{e-1}-1
$$ for $e\gg 1$. The digits of the $p$ expansion are $(p-1)$ and $(n-1)$. We cannot just yet use $N'$ as $cN$ since it is not necessarily divisible by $c$. By subtracting $\ell(1)$ from $N'$ we are making the $p^{d-1}$ digit become $(n-1)$ instead of $(p-1)$. Then we shall do the same for the $p^{2d-1}$ digit, the $p^{3d-1}$ digit and so on, through the $p^{(c-1)d-1}$ digit. Now we get an integer divisible by $c$ and we can define $N$:
$$
cN = N' - \ell(1) - ... - \ell(c-1) = np^{e-1} - 1 - \ell(1) - ... - \ell(c-1),
$$
$$
N = \frac{1}{b+c}(p-1)p^{e-1} - L,
$$ were $L$ is some integer constant, not dependent on $e$. We are about to show that $N/p^e$ is a lower bound for an arbitrary large $e$, which will complete the proof. 
Notice that $(b+c)N=(p-1)p^{e-1}-(b+c)L<p^e$, while the coefficient of $x^{(b+c)N}y^{(b+c)N}$ in $f^N$ is $H\{cN\}$. We carefully crafted $cN$ to have a $p$ expansion containing only digits of $(p-1)$ or $(n-1)$. Using \bref{hasseLucas}, we have:
$$
H\{cN\} = H\{p-1\}^{\text{some power}}H\{n-1\}^{\text{some power}}.
$$ Indeed $H\{cN\}(a)$ is non-zero since $H\{n-1\}(a) \neq 0$ and since $H\{p-1\}=(\lam-1)^{p-1}$ (\bref{pMinusOneHass}) while $a=1$ is not a root of $H\{n\}$ (\bref{Hsimple}). This completes the proof. 
\end{proof}

As promised, we deal with the $p=2$ case:
\begin{prop} Let $K$ be a field of prime characteristic $p=2$. Fix a polynomial:
$$
f_a = xy(x+y)(x+a y),\, a\in K-\{0,1\}
$$ Then $ FT(f_a) = \frac{1}{2}$.
\end{prop}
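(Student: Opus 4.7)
The plan is to mirror the proof of \bref{thmTwoLSimp} in the case $b=c=1$, obtaining matching upper and lower bounds on $N/p^e$ of the form $(2^{e-1}\pm\text{const})/2^e$. The key simplification in characteristic $2$ is that the relevant Deuring polynomial collapses to $(1+\lambda)^N$ via Schur's congruence and so is automatically non-vanishing at $a$; this is precisely why the ``$H\{n_1\}(a)=0$'' case of \bref{mainThm} does not arise when $p=2$.

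For the upper bound, I would take $N=2^{e-1}$ for each $e\geq 1$. By \bref{betaBoundHomog}, every monomial $x^{k_1}y^{k_2}$ appearing with non-zero coefficient in $f_a^N$ satisfies $k_1+k_2=4N$, hence $\max(k_1,k_2)\geq 2N = 2^e = p^e$. Applying \bref{upLowBound} then gives $FT(f_a)\leq N/p^e=\tfrac{1}{2}$.

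For the lower bound, I would take $N_e=2^{e-1}-1$ for each $e\geq 2$. Expanding exactly as in (\ref{keyObPlcG}),
$$f_a^{N_e} \;=\; x^{2N_e}y^{2N_e}\,H\{N_e\}(a) \;+\; \text{an element of }(x^{2N_e+1},y^{2N_e+1}).$$
The binary expansion $N_e=1+2+\cdots+2^{e-2}$ has every digit equal to $1$, so Schur's congruence (\bref{hasseLucas}) in $\FF_2[\lam]$ gives
$$H\{N_e\}(\lam) \;=\; \prod_{i=0}^{e-2} H\{1\}(\lam)^{2^i} \;=\; (1+\lam)^{1+2+\cdots+2^{e-2}} \;=\; (1+\lam)^{N_e},$$
using $H\{1\}(\lam)=1+\lam$. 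Since $a\neq 1$ and $\mchar K = 2$, we have $1+a\neq 0$, so $H\{N_e\}(a)=(1+a)^{N_e}\neq 0$. Because $2N_e=2^e-2<p^e$, \bref{upLowBound} yields $(2^{e-1}-1)/2^e = N_e/p^e < FT(f_a)$; letting $e\to\infty$ gives $FT(f_a)\geq\tfrac{1}{2}$. Combining the two bounds closes the proof.

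The only real choice to make is picking $N_e=2^{e-1}-1$ so that its $p$-expansion has only digits equal to $1$; Schur's congruence then gives an unconditional non-vanishing independent of $a$, so there is no ``Legendre-type'' obstruction in characteristic $2$, and the remainder of the argument is mechanical.
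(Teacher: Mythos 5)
Your proof is correct and takes essentially the same route as the paper: both use $N=2^{e-1}$ for the upper bound and $N=2^{e-1}-1$ for the lower bound, both invoke Schur's congruence to reduce $H\{N\}$ to a power of $H\{1\}=1+\lambda$, and both conclude non-vanishing from $a\neq 1$. Your write-up is slightly more explicit in justifying the upper bound via \autoref{betaBoundHomog}, but the argument is the same.
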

\begin{proof}
Note that \pref{keyObPlcG} holds but we cannot replicate the same proof as in \bref{thmTwoLSimp} as, for example, $N=(1/2)(p^e\pm1)$ is not an integer. We need to use different $N$'s. For the upper bound, use $N=\frac{1}{2}p^e$ (we intentionally do not plug in $p=2$ for clarity). Then $f_a^N$ is in $(x,y)^{[p^e]}$ thus $N/p^e=1/2$ is an upper bound. 

As for the lower bound, use $N=\frac{1}{2}(p^e-2)$. Then $f_a^N$ has a monomial $x^{2N}y^{2N}$ with $2N=p^e-2<p^e$. As long as $a$ is not a root of $H\{N\}$, $N/p^e=(1/2)(1-2/p^e)$ is a lower bound, which approaches to $1/2$ as $e\to \infty$. Notice that:
$$
N = \frac{1}{2}(p^e-2) = p^{e-1} -1 = 1+p+...+p^{e-2}. 
$$
So due to \bref{hasseLucas}
$$
H\{N\} = H\{1\}^{\text{some power}} = (1+\lam)^{\text{some power}}.
$$
Since $a\neq 1$, $H\{N\}(a)\neq 0$ and we are done. 
\end{proof}

\begin{disc}\labelt{restForms} For completeness, let us present all possible values of the $F$-pure threshold of a bivariate degree four homogeneous polynomial with four roots, not necessarily distinct.
Consider again these five forms:
$$
x^4, x^3y, x^2y^2, x^2y(x+y), xy(x+y)(x+\lam y) \text{ with } \lam \in K-\{0,1\},
$$
Indeed suffices to compute $FT(f)$ for each of these cases. The monomial cases are straightforward; it is easy to show that $FT(x_1^{a_1}x_2^{a_2}\cdots x_t^{a_t})$ is $(\max(a_1,...,a_t))^{-1}$ (\cite[Example 3.10]{KSbasic}). 
The $f=x^2y(x+y)$ case is treated in \cite{HerBin} as it is a binomial, and it is easy to see that the $F$-pure threshold in this case is $\frac{1}{2}$.
The last case is the subject of \bref{mainThm}.
\end{disc}

\section{Conclusions for Legendre polynomials}\labelt{corSec}
 
An immediate consequence of \bref{mainThm} is the following conclusion (see the equivalent result for Deuring polynomials in \cite{BrillMor04}).
\begin{cor}\labelt{rootCor} \mbox{}\\
\begin{enumerate}
\item Fix a prime $p>2$, and let $n=\frac{p-1}{2}$. If $a\in \overline{\FF_p}-\{0,1\}$ is a root of $H\{n\}$, then so are:
\begin{equation}\labelt{allroots}
(a)^{\pm1}, (1-a)^{\pm1},\left(\frac{a}{a-1}\right)^{\pm1}.
\end{equation}

\item Fix a prime $p>2$, a field $K$ of characteristic $p$ and let $n=\frac{p-1}{2}$. If $b \in K-\{\pm 1\}$ is a root of the Legendre polynomial of degree $n$, $P_n(x) \in K[x]$, then also:
$$
\pm b, \pm \frac{3+b}{-1+b}, \pm \frac{3-b}{1+b}.
$$
\end{enumerate}
\end{cor}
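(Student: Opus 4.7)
The plan is to deduce both parts from Theorem \ref{mainThm} using the invariance of the $F$-pure threshold under linear change of variables, combined with the classical fact that the cross-ratio of four ordered points admits exactly six values as the ordering varies over $S_4$, and these six values form the orbit of $a$ under the anharmonic group action $a\mapsto a,\,1/a,\,1-a,\,1/(1-a),\,a/(a-1),\,(a-1)/a$.

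For part (1), suppose $a\in\overline{\FF_p}-\{0,1,\infty\}$ is a root of $H\{n\}$, and let $f_a = xy(x+y)(x+ay)$. The four roots of $f_a$ in $\PP^1_{\overline{\FF_p}}$ are distinct and, in an appropriate ordering, have cross-ratio $a$; by Theorem \ref{mainThm}, $FT(f_a) = \tfrac{1}{2}(1-1/p)$. Now let $a'$ be any of the six expressions in \pref{allroots}. Then $a'$ is the cross-ratio of the same four points in a different ordering; equivalently, the polynomial $f_{a'}$ has roots that differ from those of $f_a$ by a Möbius transformation, hence (realized on the affine chart) by a linear change of variables together with scaling. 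Since $FT$ is preserved by such transformations, $FT(f_{a'}) = FT(f_a) = \tfrac{1}{2}(1-1/p)$, and a second application of Theorem \ref{mainThm} (now with $a'$ as the cross-ratio) gives $H\{n\}(a') = 0$. A brief verification shows each of the six values lies in $\overline{\FF_p}-\{0,1\}$ whenever $a$ does, so part (1) is complete.

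For part (2), I would use the identity \pref{DeurToLeg}, namely $H\{n\}(\lambda) = (1-\lambda)^n P_n\!\left(\tfrac{1+\lambda}{1-\lambda}\right)$. If $b\in K-\{\pm 1\}$ is a root of $P_n$, setting $\lambda = (b-1)/(b+1)$ yields $(1+\lambda)/(1-\lambda) = b$, so $\lambda$ is a root of $H\{n\}$ (it lies in $\overline{\FF_p}-\{0,1\}$ since $b\neq\pm 1$). By part (1), each of $\lambda,\,1/\lambda,\,1-\lambda,\,1/(1-\lambda),\,\lambda/(\lambda-1),\,(\lambda-1)/\lambda$ is also a root of $H\{n\}$. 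Converting back via $b' = (1+\lambda')/(1-\lambda')$ for each of the six $\lambda'$, a direct Möbius computation yields the six values $b,\,-b,\,(3+b)/(b-1),\,-(3+b)/(b-1),\,(3-b)/(b+1),\,-(3-b)/(b+1)$, which are precisely $\pm b,\,\pm(3+b)/(-1+b),\,\pm(3-b)/(1+b)$.

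The main obstacle, if any, is bookkeeping rather than substance: one must verify that the $F$-pure threshold of $f_{a'}$ really equals $FT(f_a)$ for each $a'$ in the anharmonic orbit, i.e., check that the Möbius transformation relating the ordered four-tuples of roots descends to a genuine linear change of variables on $\A^2$ (possibly after clearing a scalar). This can be done by writing out the three involutions generating the anharmonic group and exhibiting the explicit change of variables for each; the computation is routine, being essentially equivalent to the standard fact that $PGL_2$ acts triply transitively on $\PP^1$ and the cross-ratio is the unique invariant of four ordered points.
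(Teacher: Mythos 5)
Your argument is correct and matches the paper's own proof: both deduce part (1) from Theorem~\ref{mainThm} by observing that the six anharmonic-orbit values of the cross-ratio correspond to reorderings of the same four points, hence to linear changes of variables on $K[x,y]$ preserving the $F$-pure threshold, and both obtain part (2) by transporting the result through the M\"obius substitution $\lambda \mapsto (1+\lambda)/(1-\lambda)$ coming from~\pref{DeurToLeg}. Your explicit verification of the six converted values is a welcome addition of detail, but the logical route is the same.
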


\bref{thmTwoL} gives rise to another corollary; (this statement is also known in the context of Legendre polynomials).
\begin{cor}\labelt{secondCor} Fix a prime $p>2$. Let choose $b,c\in \ZZ_{>0}$ \st $p \equiv 1 \pmod{(b+c)}$. Let $a\in \overline{\FF_p}-\{0,1\}$, then:
$$
H\left\{ \frac{b}{b+c}(p-1)\right\}(a)=0 \iff H\left\{ \frac{c}{b+c}(p-1)\right\}(a)=0.
$$
\end{cor}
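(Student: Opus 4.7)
The plan is to extract this corollary from two applications of \bref{thmTwoLSimp} combined with a single linear change of variables. Consider the two polynomials
\[
f_a \;=\; x^b y^b (x+y)^c (x+a y)^c, \qquad g_a \;=\; x^c y^c (x+y)^b (x+a y)^b.
\]
Each is already in the canonical form of \bref{thmTwoLSimp}: the first with exponent pair $(b,c)$ and the second with pair $(c,b)$, both with the same cross-ratio parameter $a\in\overline{\FF_p}-\{0,1\}$. The theorem would then pin $FT(f_a)$ to the two-element set $\{\frac{1}{b+c},\,\frac{1}{b+c}(1-\frac{1}{p})\}$, taking the larger value exactly when $H\{n_c\}(a)\neq 0$ for $n_c=\frac{c}{b+c}(p-1)$; and, symmetrically, $FT(g_a)$ would take the larger value exactly when $H\{n_b\}(a)\neq 0$ for $n_b=\frac{b}{b+c}(p-1)$.

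The crux of the plan is to show $FT(f_a)=FT(g_a)$ by exhibiting an invertible linear change of variables sending $f_a$ to a nonzero scalar multiple of $g_a$. The guiding observation is that the M\"obius involution $t\mapsto \frac{-t-a}{t+1}$ of $\PP^1_{\overline{K}}$ swaps the pair $\{0,\infty\}$ with the pair $\{-1,-a\}$, which are precisely the two root-pairs whose multiplicities $b$ and $c$ are exchanged between $f_a$ and $g_a$. Lifting to the linear map
\[
L\colon (x,y)\longmapsto (-x-a y,\; x+y),
\]
whose determinant is $a-1\neq 0$, a short direct substitution would produce
\[
f_a\bigl(L(x,y)\bigr)\;=\;(-1)^{b+c}(1-a)^{2c}\, g_a(x,y),
\]
a nonzero scalar multiple of $g_a$ (the scalar is nonzero since $a\neq 1$ and $p>2$). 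Since the $F$-pure threshold at the origin is invariant under invertible linear change of variables and under multiplication by a nonzero scalar, this would yield $FT(f_a)=FT(g_a)$.

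Finally, since $p>2$ the two candidate values $\frac{1}{b+c}$ and $\frac{1}{b+c}(1-\frac{1}{p})$ are distinct, so the equality $FT(f_a)=FT(g_a)$ would force the indicator conditions coming from \bref{thmTwoLSimp} to agree, giving $H\{n_b\}(a)=0 \iff H\{n_c\}(a)=0$, which is precisely the claim. The main obstacle I anticipate is guessing the correct change of variables $L$: once one recognizes that the two branches of \bref{thmTwoLSimp} are symmetric under swapping $b\leftrightarrow c$ while preserving the zero locus of the polynomial, it is natural to seek an involution of $\PP^1$ interchanging the two root-pairs, but pinning down its explicit linear lift and verifying the resulting scalar factor is a direct computation rather than an immediate symmetry argument.
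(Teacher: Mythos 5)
Your proposal is correct and takes essentially the same route as the paper: apply \bref{thmTwoLSimp} to two realizations of the same underlying polynomial with the roles of $b$ and $c$ exchanged, then use the invariance of $FT$ under invertible linear change of variables and nonzero scaling to force the two branch conditions to agree. The only cosmetic difference is that you pin down an explicit linear lift $L(x,y)=(-x-ay,\,x+y)$ with $\det L=a-1\ne 0$ and verify $f_a\circ L=(-1)^{b+c}(a-1)^{2c}g_a$ by direct substitution, whereas the paper phrases the same identification geometrically via reordering the four roots (swapping the pair of multiplicity $b$ with the pair of multiplicity $c$), noting this preserves the cross-ratio up to $a\leftrightarrow 1/a$ and invoking the symmetry \pref{deurSym}.
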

\bigskip

The following discussion provides new proofs to both corollaries. Let $K=\overline{K}$ and consider a degree four homogeneous polynomial $f\in K[x,y]$ with distinct roots $(z_1,z_2,z_3,z_4)$ over $\PP_{{K}}^1$. The linear change of variables needed to get the form 
\begin{equation}\labelt{fixedForm}
f_a = xy(x+y)(x+a y) \text{ with } a \in K-\{0,1\}
\end{equation}  
sends:
$$
(z_1,z_2,z_3,z_4) \mapsto (0,\infty, -1, -a),
$$
and a quick computation reveals that $a$ is the cross-ratio:
$$
a = \frac{z_4-z_1}{z_4-z_2}\frac{z_3-z_2}{z_3-z_1}
$$
Since the roots are all distinct, $a$ is not $0,1$ or $\infty$. Notice that $a$ depends on the order we had chosen for the roots. Considering all possible orders, we can get the same form (\ref{fixedForm}) only with one of the following: $a, 1/a, 1-a, 1/(1-a), a/(a-1), (a-1)/a$. This can be done using a linear change of variables, thus the value of the $F$-pure threshold is preserved. With the notation from \pref{fixedForm}, we conclude that:
$$
FT\left(f_a\right) = FT\left(f_{1/a}\right)=FT\left(f_{1-a}\right)=FT\left(f_{1/(1-a)}\right)=FT\left(f_{a/(a-1)}\right)=FT\left(f_{(a-1)/a}\right),
$$
However, the conclusion of \bref{mainThm} is independent of the implicit order we had chosen for the roots. This geometrical insight reveals the interesting property of the roots of $H\left\{\frac{p-1}{2}\right\}$ over $\overline{\FF_p}$ mentioned in the first statement of \bref{rootCor}. Note that $H\{n\}(a)=0\iff H\{n\}(1/a)=0$ is expected due to the symmetry in \bref{deuDef}:
 \begin{equation}\labelt{deurSym}
 H\{n\}(\lam) = \lam^nH\{n\}(1/\lam),
 \end{equation} but the inferring on the rest of the roots in \pref{allroots} is not at all trivial. The second statement of \bref{rootCor} is obtained by rewriting the first statement using \pref{DeurToLeg}.

A similar analysis, performed in the case of \bref{thmTwoL}, gives us \bref{secondCor}: Consider a homogeneous polynomial over $K[x,y]$, $K=\overline{K}$, with 4 distinct (ordered) roots $(z_1,z_2,z_3,z_4)$ over $\PP_K^1$ of multiplicities $b,b,c,c$ respectively. After a linear change of variables the polynomial adopts the form:
\begin{equation}\labelt{thmTwoEq}
f_a = x^by^b(x+y)^c(x+a y)^c,\, a\in K-\{0,1\}.
\end{equation}
In order to do so, one maps
$$
(z_1,z_2,z_3,z_4) \mapsto (0,\infty, -1, -a),
$$
which yields the same cross-ratio:
$$
a = \frac{z_4-z_1}{z_4-z_2}\frac{z_3-z_2}{z_3-z_1}
$$
Considering the result in \bref{thmTwoL}, it is crucial to notice the value of $FT(f_a)$ is symmetric in $b,c$ but we cannot arbitrarily reorder the roots --- 0 and $\infty$ has to have the same multiplicity to obtain the form (\ref{thmTwoEq}), possibly with $b$ and $c$ interchanged. A computation shows that we can get the same form with $1/a$ instead of $a$, while the other values of the cross-ratio are not allowed when $b\neq c$. However, since we can interchange $b$ and $c$ we get that:
$$
H\left\{ \frac{b}{b+c}(p-1)\right\}(a)=0 \iff H\left\{ \frac{c}{b+c}(p-1)\right\}(a)=0.
$$
This proves \bref{secondCor}. The argument presents a new proof of a known fact in the context of Legendre polynomials. 

\bibliographystyle{amsalpha}
\bibliography{unibib}
%\nocite{A-M, Hart, Z-S, Hoc1, ST1}

\end{document}